\DeclareFontFamily{OMS}{rsfs}{\skewchar\font'60}
\DeclareFontShape{OMS}{rsfs}{m}{n}{<-5>rsfs5 <5-7>rsfs7 <7->rsfs10 }{}
\DeclareSymbolFont{rsfs}{OMS}{rsfs}{m}{n}
\DeclareSymbolFontAlphabet{\scr}{rsfs}
\newcommand{\sF}{\mathscr{F}}
\newcommand{\sG}{\mathscr{G}}
\newcommand{\sH}{\mathscr{H}}
\newcommand{\sO}{\mathscr{O}}
\newcommand{\sP}{\mathscr{P}}
\newcommand{\bP}{\mathbf{P}}
\newcommand{\fp}{\mathfrak{p}}
\newcommand{\fm}{\mathfrak{m}}
\newcommand{\fq}{\mathfrak{q}}
\DeclareMathOperator{\Ex}{{Ex}} 
\DeclareMathOperator{\snc}{{snc}} 
\newcommand{\dR}{\mathscr{R}} 
\DeclareMathOperator{\codim}{codim}
\DeclareMathOperator{\coker}{{coker}}
\DeclareMathOperator{\Spec}{{Spec}}
\theoremstyle{theorem} 
\newtheorem{theorem}{Theorem}[section]
\newtheorem*{theorem*}{Theorem}
\newtheorem{corollary}[theorem]{Corollary} 
\newtheorem{lemma}[theorem]{Lemma}
\newtheorem{proposition}[theorem]{Proposition}
\newtheorem*{proposition*}{Proposition}
\theoremstyle{definition} 
\newtheorem{definition}[theorem]{Definition}
\newtheorem{example}[theorem]{Example}
\newtheorem{notation}[theorem]{Notation}
\begin{document}

\begin{abstract}
  Rational pairs, recently introduced by Koll\'ar and Kov\'acs,
  generalize rational singularities to pairs $(X,D)$.
  Here $X$ is a normal variety and $D$ is a reduced divisor on $X$.
  Integral to the definition of a rational pair is the notion of a thrifty
  resolution, also defined by Koll\'ar and Kov\'acs,
  and in order to work with rational pairs it is often
  necessary to be able to tell whether a given resolution is thrifty.
  In this paper we present several foundational results
  that are helpful for identifying thrifty resolutions
  and analyzing their behavior.
  We also show that general hyperplane sections of
  rational pairs are again rational.
   
  In 1978, Elkik proved that rational singularities are deformation invariant.
  Our main result is an analogue of this theorem for rational pairs:
  given a flat family $X\to S$ and a Cartier divisor $D$ on $X$,
  if the fibers over a smooth point $s\in S$ form a rational
  pair, then $(X,D)$ is also rational near the fiber $X_s$.
\end{abstract}

\title[]{Deformation invariance of rational pairs}
\author{Lindsay Erickson}


\date{November 9, 2014}
\maketitle

\ 

\section{Introduction}

We present a proof that rational pairs are deformation invariant.
The notion of a \emph{rational pair},
recently introduced in \cite[Section 2.5]{KK13},
is a generalization of rational singularities from a variety $X$
to a reduced pair $(X,D)$.
A \emph{reduced pair} is a normal variety $X$
together with a Weil divisor $D$, all of whose
coefficients are 1.
Such a $D$ is a \emph{reduced divisor}.
The main results in this paper include the assumption that
the underlying field $k$ has characteristic 0.

The analogue of a smooth variety in the pairs setting is an \emph{snc pair}.
A pair $(X,D)$ has \emph{simple normal crossings}, or is snc, if
$X$ is smooth, every $D_i$ is smooth, and all the intersections of the components
$D_i$ are transverse.
If $(X,D)$ is not snc, we can still refer to the \emph{snc locus} $\snc(X,D)$,
which is the largest open set $U\subset X$ so that $(U,D\cap U)$ is snc.

\begin{definition}
  \label{def-birattrans}
Let $f\colon Y\to X$ be birational, where
$X$ is normal, and let $D\subset X$ be a divisor.
The birational transform of $D$, denoted
$f_\ast^{-1}D$, is a divisor in $Y$, and it is defined as follows:
let $U\subset X$ be the largest open set over which
$f$ is an isomorphism and an inverse $f^{-1}$ exists,
then map $D\cap U$ into $Y$ by the morphism $f^{-1}$,
and finally take the closure in $Y$ of the image.
\end{definition}

The birational transform $f_\ast^{-1}D$ is used
to define a resolution of $(X,D)$.

\begin{definition}
  \label{def-resofpairs}
A resolution of a pair $(X,D)$ is a 
resolution of singularities $f\colon Y\to X$,
such that the pair $(Y, f_\ast^{-1}D)$ has simple normal crossings.
\end{definition}

To keep the notation concise, we will write $B$ for the divisor $f_\ast^{-1}D$ in $Y$.
Note that $B$ never has any exceptional components: its components are in one-to-one
birational correspondence to the components of $D$.
We will use ``resolution'' to mean ``resolution of pairs'',
and we will refer to a resolution of $(X,D)$ with the notation $f\colon (Y,B)\to (X,D)$.
When we really mean a resolution of a variety $X$, and not of a pair $(X,D)$,
we'll specifically say as much.

If $(X,D=\bigcup D_i)$ is an snc pair, then
an irreducible component of any intersection of the $D_i$ is called a \emph{stratum}.
If $(X,D)$ is not snc, then we may consider the strata of its snc locus.
In order to determine whether a pair is rational, we'll need to examine
the strata of its snc locus and how they behave under a resolution of pairs.
A general reference for these definitions is
\cite{KK13}.
  
This new theory of rational pairs requires us to restrict our attention to
certain resolutions, called \emph{thrifty resolutions},
which were also introduced and developed in \cite{KK13}.
These resolutions of pairs have the nice properties
that are required to generalize
theorems about rational singularities of varieties to pairs.
Specifically, when we restrict to resolutions of pairs that are also thrifty,
we have an analogue of Grauert-Riemenschneider vanishing in characteristic 0
(see (\ref{GR-thrifty}) below),
and we know that if a pair has one thrifty rational resolution,
then every other thrifty resolution is also rational.
  
A thrifty resolution $f\colon (Y,B)\to (X,D)$,
which is defined in (\ref{def-thrifty}) and 
which we will discuss in much more detail below,
is a resolution in the sense of (\ref{def-resofpairs}) above
that satisfies two conditions:
\begin{enumerate}
\item (Condition 1) $f$ is an isomorphism over the generic point of any stratum of $\snc(X,D)$
\item (Condition 2) $f$ is an isomorphism at the generic point of any stratum of $(Y,B)$
\end{enumerate}

The first few results in this paper
simplify the task of verifying that a
given resolution is thrifty.
The first of these foundational results
is a simple criterion for Condition 2 in the definition of thriftiness,
which is usually the more difficult of the two conditions to check.
First, a definition.

\begin{definition}\label{def-logres}
  A resolution $f\colon Y\to X$, in the sense of (\ref{def-resofpairs}),
  is a weak log resolution if the exceptional locus $\Ex(f)$ is a divisor and
  the pair $(Y,B+\Ex(f))$ is snc.
  
  We will just call this a \emph{log resolution} throughout the paper,
  since we'll never have a reason to
  use any other type of log resolution in what follows.
\end{definition}

\begin{proposition}[see (\ref{condition2})]
  If $f\colon (Y,B)\to(X,D)$ is a log resolution as in (\ref{def-logres}),
  then $f$ satisfies Condition 2 for thriftiness.
\end{proposition}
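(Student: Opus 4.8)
The plan is to reduce the statement to a dimension count inside the snc divisor $B+\Ex(f)$. The geometric fact I would isolate first is this: since $(Y,B+\Ex(f))$ is snc on the smooth variety $Y$, any intersection of $s$ pairwise distinct prime components of $B+\Ex(f)$ is either empty or smooth of pure codimension $s$ in $Y$. Combined with the observation recorded above --- that $B=f_\ast^{-1}D$ has no exceptional components --- this says that the prime components $B_1,\dots,B_r$ of $B$ and the prime components $E_1,\dots,E_m$ of $\Ex(f)$ are $r+m$ pairwise distinct prime divisors, all of them components of $B+\Ex(f)$.

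Next, let $Z$ be a stratum of $(Y,B)$. If $\codim_Y Z=0$, then $Z=Y$ and $f$ is an isomorphism at the generic point of $Z$ simply because $f$ is birational; so assume $Z$ is an irreducible component of $B_{i_1}\cap\cdots\cap B_{i_k}$ for some distinct $i_1,\dots,i_k$ with $k\geq 1$, whence $\codim_Y Z=k$ by the preliminary fact. I claim $Z\not\subseteq\Ex(f)$. If it were, then, since $Z$ is irreducible and $\Ex(f)$ is closed, $Z$ would be contained in some component $E_j$ of $\Ex(f)$, and hence $Z\subseteq B_{i_1}\cap\cdots\cap B_{i_k}\cap E_j$. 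But $B_{i_1},\dots,B_{i_k},E_j$ are $k+1$ pairwise distinct components of $B+\Ex(f)$ --- distinct precisely because $E_j$ is exceptional while no $B_{i_\ell}$ is --- so this last intersection is empty or has codimension $k+1$ in $Y$, whereas $Z$ is a nonempty subvariety of it having codimension $k$ in $Y$; that is a contradiction, and the claim follows.

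To conclude, recall that $f$ is proper and birational with $X$ normal, so $f$ is an isomorphism over an open neighborhood of every point of $Y$ lying outside $\Ex(f)$. Since the generic point $\eta_Z$ of a stratum $Z$ of $(Y,B)$ satisfies $\eta_Z\notin\Ex(f)$ by the claim --- a closed set containing $\eta_Z$ would contain $\overline{\{\eta_Z\}}=Z$ --- the morphism $f$ is an isomorphism at $\eta_Z$, which is precisely Condition~2 for thriftiness.

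The proof is short, and its content is conceptual rather than computational. Everything turns on $B$ containing no exceptional divisor: this is what makes the $B_{i_\ell}$ distinct from the $E_j$, and hence what makes the snc-ness of $B+\Ex(f)$ force $B_{i_1}\cap\cdots\cap B_{i_k}\cap E_j$ to drop in dimension. That is the only point at which the log-resolution hypothesis enters, so the sole ``obstacle'' is to notice that snc-ness of $B+\Ex(f)$ together with non-exceptionality of $B$ is exactly the input needed.
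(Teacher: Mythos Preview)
Your proof is correct and follows essentially the same line as the paper's: both argue that a stratum $Z$ of $(Y,B)$ has codimension equal to the number of components cutting it out, and that an inclusion $Z\subseteq E_j$ would force $Z$ into an intersection of one more component, contradicting the codimension count. The only cosmetic difference is that the paper invokes \cite[4.16.2]{KK13} (a dlt-pair result) for the pure-codimension statement, whereas you extract it directly from the definition of snc, which is a little more elementary but amounts to the same thing.
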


We will also show that if a birational morphism $f\colon Y\to X$ is
an isomorphism over every stratum of $\snc(X,D)$,
then it
can be dominated by a thrifty log resolution of $(X,D)$.

\begin{theorem}[see (\ref{logres})]
  Suppose $f\colon Y\to X$ is a proper birational morphism
  between normal varieties, and
  $D\subset X$ is a reduced divisor.
  If $f$ is an isomorphism
  over every stratum of $\snc(X,D)$,
  then there is a thrifty log resolution of $(X,D)$ factoring through $f$.
\end{theorem}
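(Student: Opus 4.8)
\emph{Proof plan.} The plan is to reduce the theorem to the construction of a single auxiliary log resolution, after which thriftiness is essentially free. Suppose we can produce a proper birational morphism $g\colon Z\to Y$ with $Z$ smooth such that $h:=f\circ g\colon Z\to X$ is a log resolution of $(X,D)$ in the sense of (\ref{def-logres}) and $h$ is an isomorphism over every stratum of $\snc(X,D)$. Then $h$ is a resolution of the pair $(X,D)$ factoring through $f$; it satisfies Condition~2 for thriftiness because it is a log resolution, by the Proposition preceding this theorem (see (\ref{condition2})); and it satisfies Condition~1 because being an isomorphism over every stratum is in particular being an isomorphism over the generic point of every stratum. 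Hence $h$ is a thrifty log resolution of $(X,D)$ factoring through $f$, and the theorem follows. So everything is reduced to constructing such a $Z$.

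Write $B=f_\ast^{-1}D$, let $N\subset X$ be the closed locus over which $f$ fails to be an isomorphism, and set $U:=\snc(X,D)\setminus N$. The hypothesis says exactly that $N$ meets no stratum of $\snc(X,D)$, so every stratum is contained in $U$; moreover $X$ is smooth over $U$ (it is smooth over $\snc(X,D)$), $f$ is an isomorphism over $U$, and over $f^{-1}(U)$ the data $(Y,B,\Ex(f))$ is identified with the snc data $(U,\,D\cap U,\,\emptyset)$. Now I would invoke resolution of singularities in characteristic $0$ in two steps. First, choose a projective birational $g_1\colon Y_1\to Y$ with $Y_1$ smooth and $\Ex(f\circ g_1)$ a divisor, arranged so that $g_1$ is an isomorphism over the open set of $Y$ on which $Y$ is smooth and $\Ex(f)$ is trivial; then $g_1$ is an isomorphism over $f^{-1}(U)$. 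Second, applying embedded (log) resolution to the divisor $\Delta_1:=(f\circ g_1)_\ast^{-1}D+\Ex(f\circ g_1)$ on the smooth variety $Y_1$, obtain a projective birational $g_2\colon Z\to Y_1$ with $Z$ smooth, with $\bigl(Z,\,(g_2)_\ast^{-1}\Delta_1+\Ex(g_2)\bigr)$ simple normal crossing, and with $g_2$ an isomorphism over $\snc(Y_1,\Delta_1)$. Put $g:=g_1\circ g_2$ and $h:=f\circ g$.

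It then remains to check the three properties of $h$. It factors through $f$ by construction. For the log resolution property, compare exceptional divisors along the composite $h=f\circ g_1\circ g_2$: the $h$-exceptional divisors are exactly the $g_2$-exceptional ones together with the strict transforms of the components of $\Ex(f\circ g_1)$, so $\Ex(h)$ has the same support as $(g_2)_\ast^{-1}\Ex(f\circ g_1)+\Ex(g_2)$; consequently $B_Z+\Ex(h)$, with $B_Z=h_\ast^{-1}D=(g_2)_\ast^{-1}(f\circ g_1)_\ast^{-1}D$, has the same support as the snc divisor $(g_2)_\ast^{-1}\Delta_1+\Ex(g_2)$, hence is snc; thus $h$ is a log resolution of $(X,D)$. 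Finally, if $W$ is a stratum of $\snc(X,D)$ then $W\subset U$, the morphism $f\circ g_1$ is an isomorphism over $U$, and over $(f\circ g_1)^{-1}(U)$ the pair $(Y_1,\Delta_1)$ is identified with the snc pair $(U,D\cap U)$; hence $(f\circ g_1)^{-1}(W)\subset\snc(Y_1,\Delta_1)$, so $g_2$, and therefore $h$, is an isomorphism over $W$.

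The main obstacle is contained in the second step of the construction and the exceptional-divisor comparison: one has to use the characteristic-$0$ fact that a resolution (principalization, embedded resolution) can be chosen to be an isomorphism over the locus where the input is already smooth, respectively already simple normal crossing, and one has to keep track of the exceptional locus of the \emph{composite} $h=f\circ g_1\circ g_2$ over $X$ rather than that of $g_2$ over $Y_1$, so that it is really $B_Z+\Ex(h)$ — not merely $(g_2)_\ast^{-1}\Delta_1+\Ex(g_2)$ — that comes out simple normal crossing. The remaining verifications are routine.
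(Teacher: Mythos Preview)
Your proposal is correct and follows essentially the same strategy as the paper: first modify $Y$ so that the exceptional locus over $X$ becomes a divisor, then apply a Szab\'o-type log resolution to the pair (strict transform of $D$ plus exceptional divisor) that is an isomorphism over its snc locus, and finally invoke (\ref{condition2}) for Condition~2 while checking Condition~1 by tracing the open set $U$ through the tower. The only cosmetic differences are that the paper achieves the first step by blowing up the codimension~$\geq 2$ exceptional components (without necessarily smoothing $Y$), and that you are somewhat more explicit than the paper about why $\Ex(h)$ coincides with $(g_2)_\ast^{-1}\Ex(f\circ g_1)+\Ex(g_2)$.
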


It is not known whether a rational resolution (defined below in
(\ref{ratresofpairs})) is necessarily thrifty,
but we give a significant partial result in that direction:
it is true for log resolutions as in (\ref{def-logres}).
Indeed, every resolution can be dominated by a log resolution,
so our result suffices for many applications.

\begin{proposition}[see (\ref{rational-log-thrifty})]
  If a log resolution as in (\ref{def-logres}) of a pair is rational, then it is thrifty.
\end{proposition}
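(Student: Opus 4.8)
The plan is to dispose of Condition~2 at once, and for Condition~1 to show by Grothendieck duality that a rational log resolution cannot extract a log canonical place, whereas any log resolution violating Condition~1 must extract one.

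By the Proposition quoted in (\ref{condition2}) every log resolution already satisfies Condition~2, so it suffices to prove that a rational log resolution $f\colon (Y,B)\to (X,D)$ satisfies Condition~1. Suppose it does not. Choose a stratum $Z$ of $\snc(X,D)$ of smallest codimension over which $f$ is not an isomorphism; since the image of an exceptional divisor of a proper birational morphism onto a normal variety has codimension $\ge 2$, we have $r:=\codim_X Z\ge 2$. The properties ``log resolution'', ``rational'' (flat base change for the $R^if_\ast$), and ``not an isomorphism over $\eta_Z$'' all survive localization at $\eta_Z$, so we may assume $X=\Spec R$ with $R$ regular local of dimension $r$, $D=D_1+\dots+D_r$ with $D_i=(x_i=0)$ for a regular system of parameters $x_1,\dots,x_r$, $Z$ the closed point $\mathfrak m=(x_1,\dots,x_r)$, and (by minimality of $Z$) $f$ an isomorphism over the generic point of every coordinate stratum $(x_{i_1}=\dots=x_{i_s}=0)$ with $s<r$.

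The key point — and, I expect, the main obstacle — is that such an $f$ must extract a log canonical place of $(X,D)$, i.e.\ an exceptional prime divisor $E$ with $a(E;X,D)=-1$. Here one uses that a log resolution of an snc pair is a toroidal modification for the toroidal structures $(\Spec R,\sum D_i)$ and $(Y,B+\Ex(f))$; consequently each exceptional prime divisor $E_j$ of $f$ is a ``coordinate'' (toric) divisor, so $a(E_j;X)=\sum_i\operatorname{ord}_{E_j}(x_i)-1$ and hence $a(E_j;X,D)=a(E_j;X)-\sum_i\operatorname{ord}_{E_j}(x_i)=-1$: every exceptional divisor of $f$ is already an lc place, and $f$ has at least one since it is not an isomorphism. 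The real content is the toroidal structure statement, which one reduces to subdivisions of the positive orthant; the reduction to the full $r$-dimensional cone is exactly where one uses that $f$ is an isomorphism over the generic points of all the lower strata.

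It remains to see that this contradicts rationality. For a log resolution one has $K_Y+B=f^\ast(K_X+D)+\sum_j a(E_j;X,D)E_j$, so $\mathcal O_Y(K_Y+B)=f^\ast\mathcal O_X(K_X+D)\otimes\mathcal O_Y(\sum_j a(E_j;X,D)E_j)$, and the projection formula (legitimate since $K_X+D$ is Cartier on the regular $X$) gives $f_\ast\mathcal O_Y(K_Y+B)=\mathcal O_X(K_X+D)\otimes\mathcal I$ with $\mathcal I:=f_\ast\mathcal O_Y(\sum_j a(E_j;X,D)E_j)\subseteq\mathcal O_X$; since $f$ extracts an lc place, $\mathcal I\subsetneq\mathcal O_X$, hence $f_\ast\mathcal O_Y(K_Y+B)\subsetneq\mathcal O_X(K_X+D)$. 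On the other hand, Grothendieck duality for the proper morphism $f$ (with $X$, $Y$ regular of dimension $r$) identifies the derived dual of $Rf_\ast\mathcal O_Y(-B)$ relative to $\mathcal O_X(K_X)[r]$ with $Rf_\ast\mathcal O_Y(K_Y+B)[r]$; substituting $Rf_\ast\mathcal O_Y(-B)\cong\mathcal O_X(-D)$ (rationality, and $\mathcal O_X(-D)$ is now invertible) yields $Rf_\ast\mathcal O_Y(K_Y+B)\cong\mathcal O_X(K_X+D)$ concentrated in degree $0$, in particular $f_\ast\mathcal O_Y(K_Y+B)=\mathcal O_X(K_X+D)$ — a contradiction. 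Therefore $f$ is an isomorphism over every stratum, so it satisfies Condition~1, and together with Condition~2 this shows $f$ is thrifty. Everything after the localization is formal once the toroidal claim is in hand; that claim is the step I expect to require actual work.
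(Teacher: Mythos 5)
Your treatment of Condition~2 coincides with the paper's (both cite (\ref{condition2})), but for Condition~1 you take a genuinely different route: the paper simply restricts $f$ to $\snc(X,D)$, observes that an snc pair is dlt, and cites \cite[2.87]{KK13} (for dlt pairs, a resolution is rational if and only if it is thrifty). You instead try to show directly, via a discrepancy computation, that a rational log resolution cannot contract onto a stratum. This is a legitimate strategy in outline — and your reduction to $R$ regular local, the Grothendieck-duality step, and the observation that extracting a single lc place forces $\mathcal I\subsetneq\mathcal O_X$ are all fine — but the step you yourself flag as needing ``actual work'' is in fact false as stated.

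The claim that a log resolution of an snc pair is a toroidal modification of $(\Spec R,\sum D_i)$, so that \emph{every} exceptional divisor $E_j$ satisfies $a(E_j;X,D)=-1$, is not true. Take $X=\mathbb A^2_{x,y}$, $D=D_1+D_2=\{xy=0\}$, with $Z$ the origin. Blow up $Z$ to get $E_1$, then blow up a point $q\in E_1$ not lying on the strict transforms $B_1,B_2$, obtaining $E_2$. The composite $f\colon Y_2\to X$ is a log resolution (one checks $B_1'+B_2'+E_1'+E_2$ is snc); $\Ex(f)$ maps to the origin, so Condition~1 fails; $f$ is an isomorphism over $\eta_{D_1}$ and $\eta_{D_2}$, so your minimality hypothesis on $Z$ holds. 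Yet $a(E_2;X)=2$ while $\operatorname{ord}_{E_2}(x)=\operatorname{ord}_{E_2}(y)=1$, so $a(E_2;X,D)=0\neq -1$. In particular $f$ is not toroidal for the two boundary structures, and the identity $a(E_j;X)=\sum_i\operatorname{ord}_{E_j}(x_i)-1$ does not hold for $E_2$. So the ``every exceptional divisor is an lc place'' conclusion, and the toroidal mechanism behind it, are wrong.

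What your argument actually needs is the weaker statement that $f$ extracts \emph{at least one} lc place; in the example this is true ($a(E_1;X,D)=-1$), and I believe it is true in general, but nothing in your proof establishes it. That weaker statement is, in the log-resolution case, essentially the content of \cite[2.87]{KK13} — the very result the paper invokes — so as written your proof has a genuine gap precisely at the step you identified as the crux. One can fill it (e.g.\ by pulling the lc place extracted by $\mathrm{Bl}_Z X$ back to a common resolution with $Y$ and tracking its center in $Y$ through the snc structure of $(Y,B+\Ex(f))$), but the toroidal shortcut does not work.
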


We also show that rational pairs, like rational singularities, satisfy a Bertini-type
theorem:
cutting a rational pair by a general hyperplane yields another rational pair.

\begin{theorem}[see (\ref{bertini-type})]
  If $(X,D)$ is a projective rational pair,
  then a general hyperplane section of $(X,D)$ is also rational.
\end{theorem}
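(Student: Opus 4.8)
My plan is to fix a thrifty log resolution of $(X,D)$, cut everything by a general hyperplane, and check that the restricted map is again a thrifty rational resolution, whence rationality of the section. We may assume $\dim X\ge 2$, the cases $\dim X\le 1$ being vacuous or trivial. Embed $X\subseteq\mathbb P^N$. By (\ref{logres}) applied to $\id_X$ there is a thrifty log resolution $f\colon(Y,B)\to(X,D)$, and since every thrifty resolution of a rational pair is rational, the natural map $\mathcal O_X(-D)\to Rf_\ast\mathcal O_Y(-B)$ is a quasi-isomorphism.

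First I would run Bertini in characteristic $0$ to see that for a general $H\in|\mathcal O_{\mathbb P^N}(1)|$ (finitely many dense open conditions): $X_H:=X\cap H$ is normal, $D_H:=D\cap H$ is reduced, $\snc(X_H,D_H)=\snc(X,D)\cap H$, and the strata of $\snc(X_H,D_H)$ are precisely the nonempty $W\cap H$ for $W$ a stratum of $\snc(X,D)$. Setting $Y_H:=f^{-1}(H)$ and $B_H:=B\cap Y_H$, I would also check that $Y_H$ is smooth, that $B_H=(f_H)_\ast^{-1}D_H$, that $\Ex(f_H)=\Ex(f)\cap Y_H$ is a divisor (here one uses that $f(\Ex(f))$ has codimension $\ge 2$, so its general hyperplane section still has codimension $\ge 2$ in $X_H$), and that $\big(Y_H,\,B_H+\Ex(f_H)\big)$ is snc. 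Thus $f_H\colon(Y_H,B_H)\to(X_H,D_H)$ is again a log resolution of a reduced pair.

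Next, thriftiness of $f_H$: Condition $2$ is free from (\ref{condition2}) because $f_H$ is a log resolution; Condition $1$ holds because $f$ is an isomorphism over a dense open subset $U_W$ of every stratum $W$ of $\snc(X,D)$, and a general $H$ meets each $U_W$, so $f_H$ is an isomorphism over a dense open subset — hence the generic point — of each $W\cap H$. For the cohomological condition I would write $\mathcal O_{X_H}\simeq[\mathcal O_X(-1)\xrightarrow{\,s\,}\mathcal O_X]$ for a section $s$ cutting out $H$; since $s$ is a non-zero-divisor on the torsion-free sheaf $\mathcal O_X(-D)$ and $f^\ast s$ is one on $\mathcal O_Y(-B)$, the projection formula gives
\[
\mathcal O_X(-D)\otimes^{L}_{\mathcal O_X}\mathcal O_{X_H}\simeq\mathcal O_{X_H}(-D_H),
\qquad
Rf_\ast\mathcal O_Y(-B)\otimes^{L}_{\mathcal O_X}\mathcal O_{X_H}\simeq Rf_{H\ast}\mathcal O_{Y_H}(-B_H),
\]
the first identification also using that both sides are reflexive of rank $1$ on the normal variety $X_H$ and agree on the big open set $X_{\mathrm{sm}}\cap H$. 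Applying $-\otimes^{L}_{\mathcal O_X}\mathcal O_{X_H}$ to the quasi-isomorphism $\mathcal O_X(-D)\to Rf_\ast\mathcal O_Y(-B)$ then yields, after descent along the closed immersion $X_H\hookrightarrow X$, the natural quasi-isomorphism $\mathcal O_{X_H}(-D_H)\to Rf_{H\ast}\mathcal O_{Y_H}(-B_H)$. So $f_H$ is a thrifty rational resolution and $(X_H,D_H)$ is rational.

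The hard part will be the first step: pinning down $\snc(X_H,D_H)=\snc(X,D)\cap H$ with the exact correspondence of strata, and verifying that $B_H$ really is the birational transform of $D_H$ and that $\Ex(f_H)$ is a divisor, so that $f_H$ qualifies as a log resolution of pairs and (\ref{condition2}) is applicable. Once the geometry is set up, the thriftiness verification and the cohomological descent are formal.
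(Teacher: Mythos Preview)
Your proposal is correct and follows essentially the same route as the paper: fix a thrifty log resolution, restrict to a general hyperplane section, verify the restriction is again a thrifty log resolution (Condition~2 via (\ref{condition2}), Condition~1 by genericity of $H$), and then deduce the quasi-isomorphism $\sO_{X_H}(-D_H)\simeq \dR f_{H\ast}\sO_{Y_H}(-B_H)$ from the one on $X$. The only cosmetic difference is that you package the cohomological step as applying $-\otimes^{L}_{\sO_X}\sO_{X_H}$ and the derived projection formula, whereas the paper unwinds this into the short exact sequence $0\to\sO_Y(-Y_H-C)\to\sO_Y(-C)\to\sO_{Y_H}(-C_H)\to 0$ and its long exact pushforward; these are the same computation.
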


The main result in this paper is on deformation invariance of rational pairs
in a flat family.
In \cite{Elkik}, Elkik proved that rational singularities are deformation invariant:
given a variety $X$ with rational singularities and a flat morphism $X\to S$,
then if $s$ is a smooth point and the fiber $X_s$ over $s$ has rational
singularities,
so does $X$ in a neighborhood of $X_s$.

In this paper we show that rational pairs $(X,D)$,
with $D$ a Cartier divisor in $X$,
are also deformation invariant.

\begin{theorem}[see (\ref{main-thm})]
  Let $(X, D)$ be a pair, with $D$ Cartier.
  Suppose $X\to S$ is a flat morphism, and $s\in S$ is a smooth point
  so that the fibers $(X_s, D_s)$ form a reduced pair.
  If the fiber $(X_s, D_s)$ is a rational pair,
  then the total space $(X,D)$ is a rational pair
  in a neighborhood of $(X_s, D_s)$.
  
  That is, if the fiber $(X_s,D_s)$ is rational at $x$, then
  the total space $(X,D)$ is also rational at $x$.
\end{theorem}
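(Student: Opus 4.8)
The plan is to transport Elkik's argument from varieties to pairs, with the thrifty-resolution machinery of the paper and Grauert--Riemenschneider vanishing replacing their classical counterparts.

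Since the assertion is local on $X$ at the given point $x\in X_s$, and $\mathcal O_{S,s}$ is regular, I would first reduce to the case $\dim S=1$ by induction on $\dim S$: pick $t_1$ in a regular system of parameters of $\mathcal O_{S,s}$, set $X_1:=\pi^{-1}(V(t_1))$ and $D_1:=D|_{X_1}$; then $X_1\to V(t_1)$ is flat, its fibre over $s$ is the rational reduced pair $(X_s,D_s)$, and $D_1$ is Cartier, so by the inductive hypothesis $(X_1,D_1)$ is a rational pair at $x$; it therefore suffices to treat a flat family over a smooth curve. From now on $S=\Spec R$ is affine with coordinate $t$, so $X_s=V(\pi^*t)$ is a principal Cartier divisor and $\pi^*t$ is a nonzerodivisor on $\mathcal O_X$. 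Shrinking $X$ around $x$ we may assume $(X_s,D_s)$ is a rational pair at every point of $X_s$ (the rational locus of a pair is open). Since $X_s$ is Cohen--Macaulay (a rational pair has Cohen--Macaulay underlying variety, \cite{KK13}) and $S$ is smooth, flatness of $\pi$ makes $X$ Cohen--Macaulay near $X_s$, whence $X$ is normal near $X_s$ (it is $S_2$, and $R_1$ because a codimension-one point of $X$ specializing into $X_s$ has local ring whose reduction modulo $\pi^*t$ is a field or a discrete valuation ring, hence is itself regular); likewise $D$ is reduced near $X_s$ since $D_s$ is, so $(X,D)$ is a reduced pair there.

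Next I would fix a thrifty log resolution $h\colon(Z,C)\to(X_s,D_s)$; rationality of $(X_s,D_s)$ gives $Rh_*\mathcal O_Z(-C)\cong\mathcal O_{X_s}(-D_s)$, and Grauert--Riemenschneider vanishing for thrifty resolutions gives $R^{>0}h_*\omega_Z(C)=0$. Using the construction behind (\ref{logres}), I would choose a thrifty log resolution $\rho\colon(X',B')\to(X,D)$ adapted to $h$ over the special fibre, so that the birational transform $\widehat{X_s}\subset X'$ of $X_s$ is smooth and $\rho|_{\widehat{X_s}}\colon\widehat{X_s}\to X_s$ factors through $h$. Being a log resolution, $\rho$ is automatically thrifty for $(X,D)$: Condition 2 holds by (\ref{condition2}), and Condition 1 holds because $\rho$ may be taken to be an isomorphism over $\snc(X,D)$. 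It is therefore enough to prove that $\rho$ is a \emph{rational} resolution of $(X,D)$ over a neighbourhood of $X_s$, that is, that $\mathcal O_X(-D)\to R\rho_*\mathcal O_{X'}(-B')$ is an isomorphism near $X_s$; then $(X,D)$ is a rational pair near $X_s$, in particular at $x$. To reach the special fibre, multiply the line bundle $\mathcal O_{X'}(-B')$ by the nonzerodivisor $\rho^*t$; applying $R\rho_*$ to
\[ 0\longrightarrow\mathcal O_{X'}(-B')\xrightarrow{\ \rho^*t\ }\mathcal O_{X'}(-B')\longrightarrow\mathcal O_{X'}(-B')|_{Y_0}\longrightarrow 0,\qquad Y_0:=\rho^*X_s, \]
and Nakayama's lemma at the points of $X_s$, the claim is equivalent to
\[ R(\rho|_{Y_0})_*\bigl(\mathcal O_{X'}(-B')|_{Y_0}\bigr)\;\cong\;\mathcal O_{X_s}(-D_s)\qquad\text{near }X_s, \]
compatibly with the evident maps. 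Here $Y_0$ is a (generally non-reduced) divisor on the smooth variety $X'$, with simple normal crossing support $\widehat{X_s}+\sum_i a_iE_i$ in which the $E_i$ are $\rho$-exceptional, and $\rho|_{Y_0}\colon Y_0\to X_s$ is, by construction, a model of $X_s$ refining $h$.

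I expect this last isomorphism to be the main obstacle --- it is the pair analogue of the central step in \cite{Elkik}. One must show that the non-reduced fibre $Y_0$, which thickens $\widehat{X_s}$ along the exceptional divisors, computes over $X_s$ the same complex as the honest thrifty resolution $Z$ does, namely $Rh_*\mathcal O_Z(-C)=\mathcal O_{X_s}(-D_s)$. (Note one cannot simply discard the thickening: $\rho|_{\widehat{X_s}}$ itself need not be thrifty, so $R(\rho|_{\widehat{X_s}})_*\mathcal O_{\widehat{X_s}}(-B'|_{\widehat{X_s}})$ generally differs from $\mathcal O_{X_s}(-D_s)$, and the discrepancy must be cancelled by the exceptional directions.) I would filter $\mathcal O_{X'}(-B')|_{Y_0}$ along the components of $\supp Y_0$, so that each graded piece is the structure sheaf of a smooth variety ($\widehat{X_s}$, or an $E_i$) twisted by an effective minus an effective simple normal crossing divisor; after rewriting these sheaves via relative duality for $\rho|_{Y_0}$ and the adjunction $\omega_{Y_0}=\omega_{X'}|_{Y_0}$, the cohomology in the exceptional directions is controlled by the characteristic-zero Grauert--Riemenschneider/Kawamata--Viehweg vanishing underlying (\ref{GR-thrifty}), and it precisely compensates the difference between the contribution of $\widehat{X_s}$ and $Rh_*\mathcal O_Z(-C)$ --- the latter being identified through the factorization $\widehat{X_s}\to Z\to X_s$ and the fact that any thrifty log resolution of the snc pair $(Z,C)$ is rational. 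Reassembling the filtration gives $R(\rho|_{Y_0})_*(\mathcal O_{X'}(-B')|_{Y_0})\cong\mathcal O_{X_s}(-D_s)$, hence $\mathcal O_X(-D)\cong R\rho_*\mathcal O_{X'}(-B')$ near $X_s$, which completes the proof.
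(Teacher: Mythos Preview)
Your architecture---localize to a regular local base, induct on its dimension, run the multiplication-by-$t$ sequence and finish with Nakayama---matches the paper's. The genuine divergence is that you work on the $\sO(-D)$ side, whereas the paper (following Elkik) passes to the $\omega(D)$ side via Kempf's criterion~(\ref{kempf2}), and this is exactly what makes the argument close.

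On the $\omega$ side the paper's proof is short and complete: the logarithmic trace $f_\ast\omega_Y(B)\to\omega_X(D)$ is automatically injective by~(\ref{can-sheaves-injective}), GR-type vanishing~(\ref{GR-thrifty}) disposes of the higher direct images, and only surjectivity of the trace near $X_s$ remains. With $t$ a regular parameter and $(X_t,D_t)$ rational by induction, the paper produces via~(\ref{logres}) a thrifty resolution $f_2\colon(Y_2,B_2)\to(X_t,D_t)$ factoring through a component $Y_1\subset Y_t$, and the chain
\[
(f_2)_\ast\omega_{Y_2}(B_2)\ \hookrightarrow\ f_\ast\omega_{Y_t}(B_t)\ \longrightarrow\ \omega_{X_t}(D_t),
\]
whose composite is the isomorphism coming from rationality of $(X_t,D_t)$, forces the right-hand map to be surjective; the snake lemma and Nakayama on the cokernel then give $f_\ast\omega_Y(B)\cong\omega_X(D)$ near $X_s$. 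No analysis of a non-reduced fibre is ever required.

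Your route, by contrast, funnels everything into the claim
\[
R(\rho|_{Y_0})_\ast\bigl(\sO_{X'}(-B')|_{Y_0}\bigr)\ \cong\ \sO_{X_s}(-D_s)
\]
for the \emph{scheme-theoretic} fibre $Y_0=\widehat{X_s}+\sum a_iE_i$. You call this ``the pair analogue of the central step in \cite{Elkik}'', but it is not: Elkik avoids exactly this computation by switching to the dualizing side, and so does the paper. Your proposed filtration-plus-vanishing argument is only a sketch: the assertion that the exceptional contributions ``precisely compensate'' the discrepancy between $\widehat{X_s}$ and $Z$ is the entire content, and nothing in~(\ref{GR-thrifty}) or ordinary Kawamata--Viehweg gives it without careful bookkeeping of the multiplicities $a_i$ and the twist on each $E_j$, which you have not set up. (Incidentally, adjunction reads $\omega_{Y_0}=\omega_{X'}(Y_0)|_{Y_0}$, not $\omega_{X'}|_{Y_0}$; and you have not justified why the birational transform $\widehat{X_s}$ can be taken smooth, nor why $\rho$ can be built ``adapted to $h$'' in the precise sense you need.) As written there is a genuine gap at this step. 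The fix is simply to run the identical Nakayama argument on $f_\ast\omega_Y(B)\to\omega_X(D)$ instead, where injectivity is free and the surjectivity over $X_t$ comes from one factorization rather than a delicate cancellation.
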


With some additional assumptions, we have a corollary
about fibers near $X_s$.

\begin{corollary}[see (\ref{main-corollary})]
  If, moreover, the morphism $X\to S$ is proper
  and $S$ is a curve,
  then there is a neighborhood $W\subset S$ of $s$
  such that for any $w\in W$,
  the pair of fibers $(X_w,D_w)$ is rational.
\end{corollary}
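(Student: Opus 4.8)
The plan is to use Theorem~\ref{main-thm} to make the total space rational near $X_s$, to propagate this over a whole neighbourhood of $s$ by properness, and then to transfer rationality back down to each fibre, which over a curve is a Cartier divisor in the total space.

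For the first two steps: Theorem~\ref{main-thm} gives that $(X,D)$ is rational at every point of $X_s$, hence on some open $U\subseteq X$ with $X_s\subseteq U$. Since $f\colon X\to S$ is proper, $f(X\smallsetminus U)$ is closed and misses $s$; replacing $S$ by its complement and then by a smaller open curve, I may assume $S$ is a smooth affine curve, $f\colon X\to S$ is proper and flat, and $(X,D)$ is a rational pair over all of $S$. Since rational pairs have Cohen--Macaulay underlying variety and $S$ is regular, every fibre $X_w$ is Cohen--Macaulay, and then, using properness together with the hypotheses that $X_s$ is normal and $D_s$ reduced (normality and reducedness being open conditions on the fibres in such a family), after one more shrinking of $S$ the fibre $(X_w,D_w)$ is a reduced pair for every $w\in S$. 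It therefore remains to show: if $f\colon X\to S$ is flat and proper, $S$ a smooth curve, $(X,D)$ a rational pair, and $(X_w,D_w)$ a reduced pair for every $w$, then each $(X_w,D_w)$ is rational.

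For this descent I would argue that each fibre $X_w=f^{\ast}(w)$, being a Cartier divisor on $X$, inherits rationality. I expect the cleanest route to go through the Du Bois theory: the special fibre $(X_s,D_s)$, being a rational pair, is Du Bois, every nearby fibre is a small deformation of it and hence also Du Bois, and one then invokes the pair analogue of the statement that a Du Bois Cartier divisor inside a variety with rational singularities again has rational singularities, to conclude that $(X_w,D_w)$ is rational for $w$ near $s$; as before, closedness of the non‑rational locus and properness of $f$ then give it over a whole neighbourhood. An alternative, closer to Elkik's original argument, is to choose a thrifty log resolution $\pi\colon(Y,B)\to(X,D)$, which is then a rational resolution in the sense of (\ref{ratresofpairs}) because $(X,D)$ is a rational pair, so that $R^i\pi_{\ast}\mathcal{O}_Y(-B)=0$ for $i>0$ and $\pi_{\ast}\mathcal{O}_Y(-B)\cong\mathcal{O}_X(-D)$ (compare (\ref{GR-thrifty})); one then arranges $\pi$ to be compatible with $X_s$, restricts to the fibres, and applies Proposition~\ref{rational-log-thrifty} once the restriction is seen to be a rational log resolution of $(X_w,D_w)$.

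The main obstacle is exactly this descent from the total space to the Cartier‑divisor fibres. In the Du Bois approach one needs both the deformation invariance of Du Bois pairs and the restriction theorem just quoted. In the resolution approach the delicate point is that the restriction of a resolution of $X$ to a fibre is not literally a resolution of the fibre — exceptional multiplicities make $\pi^{\ast}X_w$ non‑reduced — so one must produce a resolution that is simultaneously thrifty for $(X,D)$ and well behaved on the fibres near $s$, and this is where the genuine work lies. Everything else is bookkeeping with openness of the relevant loci and properness of $f$.
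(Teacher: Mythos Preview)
Your reduction---apply (\ref{main-thm}), use properness to push the rational locus down to an open $W\subset S$, and replace $S$ by $W$---matches the paper exactly. The divergence is in the descent from the total space to the fibers.

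The paper's descent is a one-line appeal to the Bertini theorem (\ref{bertini-type}) already proved: once $(X,D)$ is rational, the fibers $(X_w,D_w)$ for general $w\in S$ are rational because they are general members of the pencil pulled back from the curve $S$, and the proof of (\ref{bertini-type}) applies verbatim. That proof is precisely your ``resolution restriction'' alternative, executed for a \emph{general} member: take a thrifty log resolution $\pi\colon(Y,B)\to(X,D)$; for general $w$ the pullback $Y_w=\pi^\ast X_w$ is smooth and meets $B+\Ex(\pi)$ transversally (Lemmas \ref{hyperplane-res}--\ref{hyperplane-thrifty}), so $\pi|_{Y_w}$ is again a thrifty log resolution; then the short exact sequence for the Cartier divisor $Y_w\subset Y$, twisted by $-B$ and pushed forward with the projection formula, gives the required vanishing.

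Your flagged obstacle---that ``exceptional multiplicities make $\pi^\ast X_w$ non-reduced''---is exactly what dissolves for general $w$. The composite $Y\to S$ has smooth source and is dominant onto a curve in characteristic~$0$, so by generic smoothness its general fiber $Y_w$ is smooth and in particular reduced; the same argument applied to each component of $B$ and of $\Ex(\pi)$ gives the transversality. You only need the conclusion for $w$ in a punctured neighborhood of $s$, not at $s$ itself, so you may discard finitely many bad fibers. Once you see this, your second approach \emph{is} the paper's argument, and there is no need for the Du~Bois route or for a resolution specially adapted to $X_s$.
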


\ 

\section{Rational resolutions of pairs}

Recall that a resolution $f\colon Y\to X$ of a variety is rational
if the natural morphism $\sO_X\to\dR f_\ast\sO_Y$ is a quasi-isomorphism
and the higher direct images $R^if_\ast\omega_Y$ vanish for $i>0$.
(This second part holds automatically in characteristic 0
by the Grauert-Riemenschneider vanishing theorem (\cite{GR}).)

The definition of a rational resolution of a pair $(X,D)$ is
formally very similar to that of a rational resolution
of a variety $X$.
Again, throughout this paper
we assume all varieties are defined over a field of characteristic 0.
The definitions from \cite{KK13} also make sense in positive characteristic,
but the proofs of the main results do rely on the characteristic-0 assumption.

\begin{definition}[\protect{\cite[2.78]{KK13}}]
  Let $(X,D)$ be a reduced pair.
  A resolution $f\colon (Y, B)\to(X,D)$ is rational if
  \begin{enumerate}
  \item The natural map $\sO_X(-D)\to \dR f_\ast\sO_Y(-B)$
    is a quasi-isomorphism, and
  \item The higher direct images $R^if_\ast\omega_Y(B)$ vanish for $i>0$.
  \end{enumerate}
  \label{ratresofpairs}
\end{definition}

If $D$ is a Cartier divisor in $X$, then there is a characterization
of rational resolutions $f\colon (Y,B)\to (X,D)$
that looks very much like Kempf's well-known criterion
for rational resolutions of varieties $f\colon Y\to X$
(see \cite[2.77]{KK13}).

\begin{theorem}[\protect{\cite[2.84]{KK13}}]
  \label{kempf2}
  Let $(X, D)$ be a reduced pair, with $D$ Cartier, and
  let $f\colon (Y,B)\to (X,D)$ be a resolution.
  Then $f$ is rational if and only if two conditions are satisfied:
  \begin{enumerate}
  \item $X$ is CM, and
  \item The natural map $\dR f_\ast\omega_Y(B)\to\omega_X(D)$ is a quasi-isomorphism.
  \end{enumerate}
\end{theorem}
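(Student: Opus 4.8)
The plan is to run the same Grothendieck--Serre duality argument that proves the non-pair criterion \cite[2.77]{KK13}, carrying the twists $\sO_Y(-B)$ and $\omega_Y(B)$ along; the Cartier hypothesis on $D$ is exactly what is needed to keep these twists under control. Set $n=\dim X=\dim Y$, and let $\omega_X^\bullet$ be a dualizing complex for $X$, normalized so that $X$ is CM precisely when $\omega_X^\bullet$ is quasi-isomorphic to a single coherent sheaf in cohomological degree $-n$; since $X$ is normal, that sheaf is then the canonical sheaf $\omega_X$. As $Y$ is smooth, $f^!\omega_X^\bullet=\omega_Y[n]$, and since $\sO_Y(-B)$ is a line bundle, duality for the proper morphism $f$ supplies a functorial isomorphism
\begin{equation*}
  \dR f_\ast\omega_Y(B)[n]\ \simeq\ \dR\mathcal{H}om_X\bigl(\dR f_\ast\sO_Y(-B),\,\omega_X^\bullet\bigr),
\end{equation*}
which I will call $(\star)$; applying $\dR\mathcal{H}om_X(-,\omega_X^\bullet)$ and invoking Grothendieck biduality on $D^b_{\mathrm{coh}}(X)$ (which needs only a dualizing complex, not the CM hypothesis) gives the equivalent form
\begin{equation*}
  \dR f_\ast\sO_Y(-B)\ \simeq\ \dR\mathcal{H}om_X\bigl(\dR f_\ast\omega_Y(B),\,\omega_X^\bullet\bigr)[-n],
\end{equation*}
call it $(\star\star)$. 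The whole theorem then falls out of these two identities.

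For the forward direction I would assume $f$ is rational. Condition (1) of (\ref{ratresofpairs}) identifies $\dR f_\ast\sO_Y(-B)\simeq\sO_X(-D)$, a line bundle because $D$ is Cartier, so the right side of $(\star)$ becomes $\dR\mathcal{H}om_X(\sO_X(-D),\omega_X^\bullet)\simeq\omega_X^\bullet\otimes\sO_X(D)$; condition (2) says the left side is the single sheaf $f_\ast\omega_Y(B)$ in degree $0$. Comparing, $\omega_X^\bullet\otimes\sO_X(D)$ — and hence, untwisting by the line bundle $\sO_X(-D)$, also $\omega_X^\bullet$ — has cohomology only in degree $-n$, so $X$ is CM; reading off $\mathcal H^{-n}$ then gives $f_\ast\omega_Y(B)\simeq\omega_X\otimes\sO_X(D)=\omega_X(D)$, which is condition (2) of the theorem. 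For the converse I would assume $X$ is CM, so $\omega_X^\bullet=\omega_X[n]$, and $\dR f_\ast\omega_Y(B)\simeq\omega_X(D)$; being a sheaf in degree $0$, this already yields $R^if_\ast\omega_Y(B)=0$ for $i>0$, i.e. condition (2) of (\ref{ratresofpairs}). For condition (1), feed $\dR f_\ast\omega_Y(B)\simeq\omega_X(D)$ into $(\star\star)$ to get
\begin{equation*}
  \dR f_\ast\sO_Y(-B)\ \simeq\ \dR\mathcal{H}om_X\bigl(\omega_X(D),\omega_X\bigr)\ \simeq\ \dR\mathcal{H}om_X(\omega_X,\omega_X)\otimes\sO_X(-D),
\end{equation*}
where the last step uses that $\sO_X(D)$ is a line bundle — the one place the Cartier hypothesis is genuinely needed. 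Since $X$ is normal, $\mathcal{H}om_X(\omega_X,\omega_X)=\sO_X$ (both are reflexive of rank one and agree on the smooth locus); since $X$ is CM, $\omega_X$ is a maximal Cohen--Macaulay sheaf, so $\mathcal{E}xt^i_X(\omega_X,\omega_X)=0$ for $i>0$; hence $\dR\mathcal{H}om_X(\omega_X,\omega_X)\simeq\sO_X$ and $\dR f_\ast\sO_Y(-B)\simeq\sO_X(-D)$, which is condition (1).

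The remaining — and, I expect, most error-prone — point is to check that the abstract isomorphisms above are the \emph{natural} maps: that $(\star)$ carries the duality (trace) map $\dR f_\ast\omega_Y(B)\to\omega_X(D)$ to the one in the theorem's condition (2), and likewise for $\sO_X(-D)\to\dR f_\ast\sO_Y(-B)$ in (\ref{ratresofpairs}). I would handle this by restriction to a big open set: $X$ is normal and $f$ is an isomorphism over the complement of a closed subset of codimension $\ge 2$, and all the sheaves in play ($\sO_X(\pm D)$, $\omega_X(D)$, $f_\ast\sO_Y(-B)$, $f_\ast\omega_Y(B)$) satisfy Serre's condition $S_2$, so a morphism between two of them is determined by its restriction to that open set, where $f$ is an isomorphism and every map in sight is visibly the identity; combined with functoriality of the trace this pins down the maps. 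Apart from this bookkeeping, the only inputs beyond formal duality are the projection formula used to twist by $\sO_X(\pm D)$ — whence the Cartier assumption — and the identity $\dR\mathcal{H}om_X(\omega_X,\omega_X)\simeq\sO_X$ for normal CM $X$, neither of which is difficult.
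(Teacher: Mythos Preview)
The paper does not prove this statement at all: it is quoted as \cite[2.84]{KK13} and used as a black box, so there is no in-paper argument to compare against. That said, your proof is correct and is essentially the standard Grothendieck--Serre duality argument one would expect to find behind the cited reference (and behind the non-pair Kempf criterion \cite[2.77]{KK13} it generalizes). Your identification $\dR\mathcal{H}om_X(\omega_X,\omega_X)\simeq\sO_X$ on a normal CM scheme is most cleanly obtained by applying biduality to $\sO_X$ with $\omega_X^\bullet=\omega_X[n]$, which is in effect what you do; the separate appeal to ``$\omega_X$ is maximal Cohen--Macaulay, hence $\mathcal{E}xt^i$ vanishes'' is not wrong but is a slightly roundabout justification for the same fact. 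Your caution about checking that the abstract duality isomorphisms match the \emph{natural} maps in the statement is well placed, and the $S_2$/big-open-set argument you sketch is the right way to pin them down.
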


It is well known that if one resolution of a variety $X$ is rational, then
every other resolution of $X$ is also rational.
In this case we say that $X$ has \emph{rational singularities}.
The analogous statement does not hold for
rational resolutions of pairs as defined in (\ref{ratresofpairs}):
there are many pairs $(X,D)$ that have both rational
and non-rational resolutions.
For example, an snc pair $(X,D)$ that has a stratum of at least codimension 2
will have both types of resolutions; see \cite[p.\ 94]{KK13}.

The resolutions that cause this statement to fail
are all of a certain type.
If we exclude them,
then the analogous claim for pairs \emph{is} true.
So we will restrict our attention
to a certain type of resolution.
The resolutions that we'll consider are called \emph{thrifty}.

\begin{definition}[\protect{\cite[2.79]{KK13}}]
  \label{def-thrifty}
  Let $(X,D)$ be a reduced pair, with $f\colon (Y, B)\to (X, D)$ a resolution.
  Then $f$ is a thrifty resolution if two conditions hold:
  \begin{enumerate}
  \item $f$ is an isomorphism over the generic point of every stratum
    of $\snc(X,D)$; equivalently, $f(\Ex(f))$ does not contain any stratum of $\snc(X, D)$
  \item $f$ is an isomorphism at the generic point of every stratum
    of $(Y,B)$; equivalently, the exceptional locus $\Ex(f)$ does
    not contain any stratum of $(Y, B)$.
  \end{enumerate}
  We'll refer to these as Conditions 1 and 2 from now on.
\end{definition}

It is shown in \cite[2.86]{KK13} that in characteristic 0,
if a pair $(X,D)$ has a thrifty rational
resolution,
then every other thrifty resolution of $(X,D)$ is also rational.
As long as we restrict our attention to thrifty resolutions, then,
the situation for pairs is similar to the situation for varieties.

Now we have enough to define rational pairs.

\begin{definition}[\protect{\cite[2.80]{KK13}}]
  \label{rational-res-pairs} If $X$ is a normal variety, with $D$ a reduced
  divisor on $X$,
  then the pair $(X,D)$ is rational if it has a thrifty rational resolution.
  (Equivalently, if every thrifty resolution of $(X,D)$ is rational.)
\end{definition}

It is an open question whether a rational resolution is necessarily thrifty.
For dlt pairs this is true; see \cite[2.87]{KK13}.
In (\ref{rational-log-thrifty}) below, we give another partial answer:
given a resolution of pairs $f\colon  (Y,B)\to (X,D)$, if the entire preimage
of $D$ is an snc divisor in $Y$---that is, if $f$ is a log resolution
as in (\ref{def-logres})---then $f$ is indeed thrifty.

\ 

\section{Preliminary results: Thrifty resolutions}

In order to work with rational pairs, 
it will be essential
to be able to tell whether a given resolution is thrifty---that is,
whether it satisfies Conditions 1 and 2 from (\ref{def-thrifty}).
Condition 1 of thriftiness is a property of $\snc(X,D)$:
to see whether it holds, it is only necessary to check
the $\snc$ locus, where $X$, the $D_i$, and all the intersections of the $D_i$
are smooth.
Condition 2, on the other hand, is \emph{not} a property of $\snc(X,D)$,
so we must examine points that map to the non-snc locus of $(X,D)$.
Points outside of $\snc(X,D)$ are trickier to deal with, 
so alternative ways to
check for Condition 2 would be welcome.

Condition 2 is automatic if $f$ is a log resolution,
as we'll show below.
Such an $f$ is then thrifty if and only if
it satisfies Condition 1.
Recall that a \emph{log resolution} $(Y,B)\to(X,D)$,
as defined in (\ref{def-logres}), is
a resolution with two additional conditions:
the exceptional locus $\Ex(f)$ is a divisor in $Y$,
and the pair $(Y,B+\Ex(f))$ is snc.
This is a much stronger requirement than
that $(Y,B)$ be snc,
as in the definition of a resolution of pairs:
for $f$ to be a log resolution, 
the components of $\Ex(f)$ must be smooth divisors
and must intersect each other and
the components of $B$ transversally.

\begin{proposition}
  \label{condition2}
  If $f\colon (Y,B)\to(X,D)$ is a log resolution
  as in (\ref{def-logres}), then
  $f$ satisfies Condition 2.
\end{proposition}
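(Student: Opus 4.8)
The plan is to show that every stratum of $(Y,B)$ meets the open locus where $f$ is an isomorphism, by arguing that no stratum of $(Y,B)$ can be contained in the exceptional locus $\Ex(f)$. Write $B = \sum B_j$ and let $Z$ be a stratum of $(Y,B)$, i.e.\ an irreducible component of $\bigcap_{j\in J} B_j$ for some index set $J$. Since $f$ is a log resolution, $(Y, B + \Ex(f))$ is snc, so in particular the components of $\Ex(f)$ meet the components of $B$ transversally; near a general point of $Z$ we may choose local analytic (or \'etale) coordinates $y_1,\dots,y_n$ in which the $B_j$ for $j\in J$ are among the coordinate hyperplanes and each component $E$ of $\Ex(f)$ through that point is another coordinate hyperplane.

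First I would reduce to the statement that $Z\not\subset E$ for every component $E$ of $\Ex(f)$. Indeed, $\Ex(f)$ is a divisor (this is part of the definition of log resolution), so $Z\subset\Ex(f)$ would force $Z$, being irreducible, to lie inside some component $E$; conversely if $Z$ avoids the generic point of every component of $\Ex(f)$ then the generic point of $Z$ lies in the isomorphism locus of $f$, which is exactly Condition 2. So the heart of the matter is: \emph{a stratum of $(Y,B)$ is not contained in any component of $\Ex(f)$}.

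Next I would prove this by a dimension/transversality count in the local snc coordinates. Suppose $Z\subset E$ with $E$ a component of $\Ex(f)$. In the coordinates above, $Z$ is (a component of) a coordinate linear subspace $\{y_j = 0 : j\in J\}$ and $E=\{y_{j_0}=0\}$ for some index $j_0\notin J$ (it is not one of the $B_j$, since $B$ has no exceptional components). But then $\{y_j = 0: j\in J\}\subset\{y_{j_0}=0\}$ is impossible: a coordinate subspace is never contained in a coordinate hyperplane defined by a coordinate outside its defining set. More invariantly: in the snc pair $(Y, B+\Ex(f))$, the stratum $Z$ of $(Y,B)$ is a union of strata of the larger snc pair $(Y,B+\Ex(f))$ refined by $E$, and a stratum of an snc configuration is contained in a component of that configuration only if that component is one of the ones cutting out the stratum. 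Since $E$ is exceptional it is not among the $B_j$, so $Z\not\subset E$, a contradiction.

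The main obstacle, and the only point requiring care, is making the transversality argument clean: one must use that $(Y, B+\Ex(f))$ is snc (not merely that $(Y,B)$ is snc and $\Ex(f)$ is a divisor) to get simultaneous coordinates for the $B_j$ and the components of $\Ex(f)$ at a general point of $Z$, and one must invoke that $B=f_*^{-1}D$ has no exceptional components so that the $B_j$ and the components of $\Ex(f)$ are genuinely distinct divisors. Once the local picture is set up, the containment $Z\not\subset\Ex(f)$ is immediate, and as noted this is equivalent to Condition 2. I would also remark that the argument is local on $Y$, so there is no loss in passing to a neighborhood of a general point of each stratum $Z$.
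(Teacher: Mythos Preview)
Your proposal is correct and follows essentially the same approach as the paper: reduce Condition 2 to showing no stratum $Z$ of $(Y,B)$ lies in a component $E$ of $\Ex(f)$, then use that $(Y,B+\Ex(f))$ is snc and $E$ is distinct from the $B_j$ to derive a contradiction via codimension/transversality. The only cosmetic difference is that the paper packages the codimension count by citing a result on dlt pairs (\cite[4.16.2]{KK13}) asserting that an intersection of $s$ components has pure codimension $s$, whereas you carry out the same count directly in local snc coordinates; your version is slightly more elementary but otherwise identical in content.
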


\begin{proof}
  Suppose $f\colon (Y,B)\to(X,D)$ is a log resolution
  as defined in (\ref{def-logres}).
  Let $E$ be the reduced divisor supported on $\Ex(f)$, so that the pair
  $(Y,B+E)$ is snc and $B+E$ is reduced.
  Condition 2 fails exactly when a stratum of $(Y,B)$ is contained in $E$.
  Let $Z$ be a stratum of $(Y,B)$, so that $Z$ is a component
  of some intersection $\bigcap_{i\in I}D_i'$, where $I=i_1,\ldots, i_s$.
  Now we appeal to \cite[4.16.2]{KK13}, originally stated and proved in \cite[3.9.2]{Fujino},
  which says that an intersection of $s$ components of the divisor
  in a reduced dlt pair has pure codimension $s$.
  This theorem applies here because our $(Y,B)$ 
  is snc and therefore dlt,
  so $Z$ has codimension $s$.
  
  Now if $Z\subset E$, then $Z$ is contained in some component $E_j$ of $E$.
  But then $Z\subset E_j\cap\left(\bigcap_{i\in I}D_i'\right)$.
  Now $(Y,B+E)$ is also snc and hence dlt, so this intersection
  has codimension $s+1$ by another application of \cite[4.16.2]{KK13}.
  But then $Z$, which has codimension $s$, cannot be a subset of this intersection,
  so this situation is impossible.
  Thus Condition 2 always holds for a log resolution.
\end{proof}

The next result will be very useful in the proof of the main theorem (\ref{main-thm}).
In that argument, we analyze a pair $(X,D)$ and a certain subpair $(X_t,D_t)$, where $X_t$
is a Cartier divisor in $X$, and $D_t$ in $D$.
We'll appeal to the following theorem to show that the restriction of a thrifty
resolution of $(X,D)$ to $X_t$ is dominated by a thrifty resolution of $(X_t,D_t)$.

\begin{theorem}
  \label{logres}
  Suppose $f\colon Y\to X$ is a proper birational morphism
  between normal varieties, and
  $D\subset X$ is a reduced divisor.
  If $f$ is an isomorphism
  over every stratum of $\snc(X,D)$,
  then there is a thrifty log resolution of $(X,D)$ factoring through $f$.
\end{theorem}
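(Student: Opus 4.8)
The plan is to produce the desired thrifty log resolution in two stages: first arrange that the relevant preimage is a simple normal crossings divisor, then check that Condition 1 survives the construction. Starting from $f\colon Y\to X$, apply a functorial resolution of singularities to $Y$ together with the divisor $f_\ast^{-1}D \cup \Ex(f) \cup f^{-1}(\Sing(X,D))$ — more precisely, blow up inside $Y$ until the total transform of this divisor is snc and $Y$ itself is smooth, obtaining $g\colon Y'\to Y$ with $Y'$ smooth. Set $h = f\circ g\colon Y'\to X$. Because we resolved so that the union of the birational transform of $D$, the exceptional locus of $f$, and the full preimage of the non-snc locus is snc, the composite $h$ is a log resolution of $(X,D)$ in the sense of (\ref{def-logres}): $\Ex(h)$ is a divisor (it contains $\Ex(g)$ and the transform of $\Ex(f)$), and $\big(Y', (h_\ast^{-1}D) + \Ex(h)\big)$ is snc. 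By (\ref{condition2}), $h$ automatically satisfies Condition 2, so it only remains to verify Condition 1.

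For Condition 1, recall it asks that $h(\Ex(h))$ contain no stratum of $\snc(X,D)$. Let $W$ be such a stratum and let $\eta$ be its generic point; since $\eta\in\snc(X,D)$, the hypothesis on $f$ gives that $f$ is an isomorphism over a neighborhood of $\eta$ in $X$ — in particular $f$ is an isomorphism over $\eta$, so $\eta\notin f(\Ex(f))$ and $f^{-1}(\eta)$ is a single smooth point of $Y$ lying in the snc locus of $\big(Y, f_\ast^{-1}D + \Ex(f)\big)$ (away from $\Ex(f)$, this locus matches $\snc(X,D)$ pulled back). The one thing to be careful about is the auxiliary blowup of $f^{-1}(\Sing(X,D))$: since $\eta$ maps away from $\Sing(X,D)$ and away from $\Ex(f)$, a neighborhood of $f^{-1}(\eta)$ in $Y$ is already snc as a pair and disjoint from everything we blow up, so $g$ is an isomorphism there. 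Hence $h$ is an isomorphism over a neighborhood of $\eta$, and $W\not\subset h(\Ex(h))$. This holds for every stratum, giving Condition 1.

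Finally, $h$ factors through $f$ by construction (it is $f\circ g$), so $h$ is the required thrifty log resolution of $(X,D)$ factoring through $f$. The main obstacle — and the point that needs the most care — is arranging the resolution $g$ of $Y$ so that two things hold simultaneously: the total preimage of $D$ becomes snc (making $h$ a genuine log resolution, which buys Condition 2 for free), while $g$ remains an isomorphism over the locus lying above the strata of $\snc(X,D)$ (preserving Condition 1). This is exactly what a resolution that is an isomorphism over the snc locus of the pair $\big(Y, f_\ast^{-1}D + \Ex(f)\big)$ provides, so the key input is that functorial/embedded resolution does not modify points where the ambient variety and divisor are already snc. One should also confirm that no new exceptional component of $g$ dominates a stratum $W$: any such component would be centered over $f^{-1}(\eta)$, a smooth snc point, where $g$ is an isomorphism — contradiction.
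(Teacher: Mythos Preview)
Your argument is correct and follows essentially the same route as the paper. Both proofs dominate $f$ by a log resolution that is an isomorphism over the snc locus of the relevant pair on $Y$ (you invoke functorial resolution, the paper cites Szab\'o), then use (\ref{condition2}) to get Condition~2 for free and check Condition~1 by observing that the preimage of each stratum's generic point already lies in that snc locus. The only cosmetic differences are that the paper separates out a preliminary blowup of the codimension~$\geq 2$ exceptional components of $f$ before applying Szab\'o, whereas you principalize $f_\ast^{-1}D\cup\Ex(f)$ in one shot, and your inclusion of $f^{-1}(\Sing(X,D))$ in the center is harmless but unnecessary. One small point to tighten: your parenthetical ``it contains $\Ex(g)$ and the transform of $\Ex(f)$'' shows $\Ex(h)$ contains a divisor, not that it is one; the needed statement is $\Ex(h)=\Ex(g)\cup g^{-1}(\Ex(f))$, which is a divisor by your principalization.
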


\begin{proof}
  Let $B=f_\ast^{-1}D$ be the birational transform of $D$ in $Y$.
  We'll construct a thrifty log resolution $g$ of $(X,D)$ 
  starting with the pair $(Y,B)$.
  
  First, blow up the exceptional components of codimension at least 2
  in $Y$ of the birational
  morphism $f\colon Y\to X$.
  From this we obtain a new proper birational morphism $f'\colon Y'\to X$,
  and a new birational transform $B'$ of $D$ in $Y'$.
  Now the exceptional locus of $Y'\to X$ is a divisor in $Y'$,
  and we'll call it $E'$.
  
  Let $h$ be a log resolution of the pair $(Y',B'+E')$ that is
  an isomorphism over $\snc(Y',B'+E')$.
  To do this, we appeal to Szab\'o's theorem from \cite{Szabo},
  which says that choosing such a resolution is possible;
  see \cite[10.45.2]{KK13}.
  
  Now write $g=f\circ f'\circ h$.
  It is a log resolution of $X$ and the birational transform of $D$
  by $g$ is an snc divisor.
  We'll show now that $g$ is also an isomorphism
  over the generic point of every stratum of $\snc(X,D)$.
  
  Let $U\subset X$ be the open set over which $f$ is an isomorphism.
  Let $x$ be the generic point of any stratum of $\snc(X,D)$.
  Then by assumption $x\in U$, so $x\in U\cap\snc(X,D)$.
  The pair $(Y,B)$ is snc at the preimage of $x$ in $Y$,
  because $f$ is an isomorphim there.
  Similarly, $(Y',B')$ is snc at the preimage of $x$ in $Y'$:
  again, the blowups $f'\colon Y'\to Y$ do not affect points outside
  the exceptional locus of $f$,
  so the entire preimage of $U\cap\snc(X,D)$ in $Y'$ is inside the locus
  where $f'$ is an isomorphism.
  The exceptional locus $E'$ is disjoint from the preimage of $x$ in $Y'$,
  so the pair $(Y',B'+E')$ is also snc there.
  We chose the resolution $h$ of $(Y',B'+E')$ to be an isomorphism
  over $\snc(Y',B'+E')$, so the composition $g=f\circ f'\circ h$
  is an isomorphism over $x$.
  
  In other words, $g$ satisfies Condition 1 of thriftiness from (\ref{def-thrifty}).
  Since $g$ is a log resolution of $(X,D)$,
  it follows from (\ref{condition2}) that $g$ is thrifty.
  So there is a thrifty log resolution of $(X,D)$ factoring through $f$.
\end{proof}

\begin{corollary}
  Every thrifty resolution is dominated by a thrifty log resolution.
\end{corollary}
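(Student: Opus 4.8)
The plan is to obtain this as an immediate consequence of Theorem~\ref{logres}. Let $f\colon (Y,B)\to(X,D)$ be any thrifty resolution. Being a resolution of singularities, $Y$ is smooth, hence normal, and $X$ is normal as part of the data of a reduced pair; moreover $f$ is proper and birational. So the only hypothesis of \ref{logres} that requires checking is that $f$ is an isomorphism over every stratum of $\snc(X,D)$.

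First I would note that this hypothesis is exactly Condition~1 of thriftiness from \ref{def-thrifty}: since $f$ is thrifty, $f(\Ex(f))$ contains no stratum of $\snc(X,D)$, equivalently $f$ is an isomorphism over the generic point of each such stratum. Then I would simply apply \ref{logres} to produce a thrifty log resolution $g\colon (Z,B_Z)\to(X,D)$ that factors through $f$, say $g = f\circ h$ with $h\colon Z\to Y$. The existence of such an $h$ is precisely what it means for $g$ to dominate $f$, so $g$ is the desired thrifty log resolution dominating the given thrifty resolution.

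I do not expect a genuine obstacle here, since all the substantive work has already been done in \ref{logres} (which in turn relies on \ref{condition2}). The one point to be careful about is matching the phrasing ``isomorphism over every stratum'' in the statement of \ref{logres} with Condition~1, which only asserts an isomorphism over the \emph{generic point} of each stratum; but this mismatch is harmless, because the proof of \ref{logres} uses exactly the latter --- it only needs the generic point of each stratum of $\snc(X,D)$ to lie in the open locus where $f$ is an isomorphism.
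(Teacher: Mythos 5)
Your proof is correct and takes exactly the approach the paper intends: the corollary is stated immediately after Theorem~\ref{logres} with no proof given, precisely because it follows by applying that theorem to a thrifty resolution, whose Condition~1 supplies the required hypothesis. Your remark about the slight mismatch between ``isomorphism over every stratum'' and ``isomorphism over the generic point of each stratum'' is a fair observation, and you correctly note it is harmless because the proof of \ref{logres} only uses the generic-point version.
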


\begin{proposition}\label{rational-log-thrifty}
  If a log resolution of a pair is rational, then it is thrifty.
\end{proposition}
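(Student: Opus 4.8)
The plan is to use the fact (Proposition~\ref{condition2}) that a log resolution automatically satisfies Condition~2, so that only Condition~1 needs to be checked, and then to prove the contrapositive: if $f\colon(Y,B)\to(X,D)$ is a log resolution for which $f(\Ex(f))$ contains a stratum $W$ of $\snc(X,D)$, then $f$ is not rational. Since both conditions defining a rational resolution in (\ref{ratresofpairs}) are statements about complexes of sheaves on $X$, rationality is preserved under restricting $X$ to an open subset; so I would first replace $X$ by an affine open neighborhood of the generic point $\eta_W$ of $W$ inside $\snc(X,D)$. After this, $X$ is smooth, $D$ is snc, $D$ is in particular Cartier, and still $W\subseteq f(\Ex(f))$.

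Next I would bring in Kempf's criterion. Since $D$ is Cartier and $X$ is Cohen--Macaulay, Theorem~\ref{kempf2} says $f$ is rational if and only if $\dR f_\ast\omega_Y(B)\to\omega_X(D)$ is a quasi-isomorphism; in particular this map must be an isomorphism on $R^0f_\ast$, where it is the natural inclusion $f_\ast\omega_Y(B)\hookrightarrow\omega_X(D)$ (using that $f_\ast\omega_Y=\omega_X$ because $X$ is smooth). Writing $K_Y+B=f^\ast(K_X+D)+\sum_i e_iE_i$ over the $f$-exceptional prime divisors $E_i$ (all divisors in sight are now Cartier, so the $e_i$ are integers), the projection formula gives $f_\ast\omega_Y(B)=\omega_X(D)\otimes f_\ast\sO_Y(\sum_i e_iE_i)$ inside $\omega_X(D)$. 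Because an snc pair is log canonical, $e_i\ge-1$ for all $i$; and $f_\ast\sO_Y(\sum_i e_iE_i)=\sO_X$ exactly when every $e_i\ge0$ --- if all $e_i\ge0$ the divisor is effective and exceptional, while if some $e_{i_0}=-1$ then near the (codimension $\ge 2$) image of $E_{i_0}$ the constant function $1$ fails to be a section. So rationality of $f$ forces $e_i\ge0$ for every exceptional $E_i$, and it remains to contradict this.

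The heart of the proof --- and the step I expect to be the main obstacle --- is the following: \emph{if $f$ is a log resolution and $f(\Ex(f))$ contains a stratum, then some exceptional divisor has $e_i=-1$.} (This is exactly where log smoothness of $(Y,B+\Ex(f))$, as opposed to merely snc-ness of $(Y,B)$, is used.) I would prove it by localizing $X$ once more, at $\eta_W$, so that $X=\Spec R$ with $R$ regular local of dimension $s=\codim W\ge 2$, $D=V(x_1\cdots x_s)$ for a regular system of parameters, and the closed point $\mathfrak m$ --- the unique minimal stratum --- lies in $f(\Ex(f))$, hence outside the isomorphism locus $U$ of $f$. The pair $(\Spec R,D)$ is log canonical with $\mathfrak m$ an lc center, so $\mathfrak m=\operatorname{center}_{\Spec R}(E^\ast)$ for some lc place $E^\ast$ (e.g.\ the exceptional divisor of $\operatorname{Bl}_{\mathfrak m}$). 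The pair $(Y,\Delta_Y)$ with $\Delta_Y=B+\sum_i(-e_i)E_i$ is crepant over $(\Spec R,D)$ and is sub-log-canonical with simple normal crossing support, its coefficient-one part being $B+\sum_{e_i=-1}E_i$. Since $E^\ast$ is then also an lc place of $(Y,\Delta_Y)$, its center $Z$ on $Y$ is an lc center of $(Y,\Delta_Y)$ with $f(Z)=\mathfrak m$, and by the standard description of lc centers of a log smooth pair $Z$ is a component of an intersection of components of $B$ together with those $E_i$ that have $e_i=-1$. If no such $E_i$ existed, $Z$ would be a component of $\bigcap_{k\in K}D'_k$ for some nonempty $K\subseteq\{1,\dots,s\}$, where $D'_k$ is the birational transform of $V(x_k)$; using that $(Y,B+\Ex(f))$ is snc, this intersection has no component contained in $\Ex(f)$, so it coincides with the birational transform of $\bigcap_{k\in K}V(x_k)$. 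That birational transform surjects onto the stratum $\bigcap_{k\in K}V(x_k)$ if $K\ne\{1,\dots,s\}$, giving $f(Z)$ positive-dimensional, and is empty if $K=\{1,\dots,s\}$ (as $\mathfrak m\notin U$); either way $f(Z)=\mathfrak m$ is contradicted. Hence some $E_i$ has $e_i=-1$.

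Combining the two parts, a rational log resolution cannot have $f(\Ex(f))$ containing a stratum, so it satisfies Condition~1; together with Condition~2 from Proposition~\ref{condition2}, it is thrifty. Everything outside the third paragraph is essentially bookkeeping with the projection formula and Theorem~\ref{kempf2}; the substantive input is the lc-center argument there, which rests on the behaviour of log canonical centers under crepant birational morphisms as developed in \cite{KK13}.
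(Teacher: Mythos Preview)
Your argument is correct, but it is substantially more involved than the paper's proof. The paper observes, just as you do, that Condition~2 is automatic by Proposition~\ref{condition2} and that rationality restricts to the open set $U=\snc(X,D)$. At that point, however, it simply notes that $(U,D|_U)$ is snc, hence dlt, and invokes \cite[2.87]{KK13}, which asserts that a resolution of a dlt pair is rational if and only if it is thrifty. This immediately gives Condition~1 over $U$, which is all that is needed.

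What you have done is essentially unpack a special case of that cited result. Your route through Theorem~\ref{kempf2}, the projection formula, and the identification of $f_\ast\omega_Y(B)\cong\omega_X(D)$ with the discrepancy condition $e_i\ge 0$ is standard and correct. The substantive step is your third paragraph, where you show that if a stratum lies in $f(\Ex(f))$ then some exceptional divisor has discrepancy $-1$; your lc-center argument there is sound, and the log hypothesis on $f$ is used exactly where you say, to guarantee that $(Y,\Delta_Y)$ has snc support so that its lc centers are the strata of $\Delta_Y^{=1}$. One small caution: after localizing at $\eta_W$ you are no longer working with varieties of finite type, so the lc-place formalism should be interpreted scheme-theoretically (or you could instead shrink $X$ to an affine neighborhood of $\eta_W$ in which $W$ is the unique minimal stratum and run the same argument without leaving the variety setting). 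Either way the discrepancy computation is local and unaffected. Your approach has the merit of being self-contained rather than relying on the black-box equivalence from \cite{KK13}, at the cost of being considerably longer.
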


\begin{proof}
  Let $f\colon (Y,B)\to (X,D)$ be a rational log resolution.
  We'll verify that $f$ satisfies Condition 1 and Condition 2.
  Since $f$ is log, Condition 2 is automatic by (\ref{condition2}).
  Condition 1, on the other hand, is a property of $\snc(X,D)$.
  Rational resolutions are defined in terms of sheaves,
  and $U=\snc(X,D)$ is an open set, so the restriction
  of $f$ to $f^{-1}(U)$ is still a rational resolution of $(U,D\cap U)$.
  
  Every snc pair is dlt, so by \cite[2.87]{KK13},
  which says that a resolution of a dlt pair is rational
  if and only if it is thrifty,
  we conclude that $f$ is thrifty over $\snc(X,D)$,
  and hence satisfies Condition 1 there.
  But it is sufficient to check the snc locus of $(X,D)$ to verify Condition 1,
  so $f$ is thrifty.
\end{proof}

Next we'll show that thrifty resolutions 
satisfy an analogue of the Grauert-Riemenschneider
vanishing theorem (see \cite{GR}).
Thus the second condition in (\ref{ratresofpairs})
is automatically true, at least
for resolutions that are known to be thrifty.

To prove this, we'll start with a recent result from the literature.
The assumption that our varieties are defined over a field
of characteristic $0$ is necessary here.

\begin{proposition}[Special case of \protect{\cite[10.34]{KK13}}]
  Let $(Y,B)$ be snc, and $f\colon Y\to X$ a projective morphism
  of varieties over a field of characteristic $0$.
  For any lc center $Z$ of $(Y,B)$,
  write $F_Z\subset Z$ for the generic fiber of $f|_Z\colon Z\to f(Z)$.
  Set
  
  \begin{align*}
    c=\max\{\dim F_Z\colon Z\text{ is an lc center}\}.
  \end{align*}
  
  \ 
  
  Then $R^if_\ast\omega_Y(B)=0$ for $i>c$.
  \label{GR-thrifty-prelim}
\end{proposition}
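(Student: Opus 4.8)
The plan is to obtain this by specializing the Kollár-type vanishing theorem \cite[10.34]{KK13}. That theorem takes as input an snc pair together with a projective morphism over a field of characteristic $0$---this is exactly where Kollár's torsion-freeness and vanishing theorems enter, which is why the characteristic hypothesis is needed---and it controls the higher direct images of the twisted canonical sheaf of the pair in terms of its log canonical centers. So I would first recall \cite[10.34]{KK13} in the precise form required, and then apply it to the snc pair $(Y,B)$ itself, with no auxiliary twist, so that the relevant sheaf is exactly $\omega_Y(B)$, as in the statement.

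What remains is purely a matter of matching conventions. I would first record that the log canonical centers of an snc pair $(Y,B)$ with $B$ reduced are exactly $Y$ itself together with the strata of $(Y,B)$, that is, the irreducible components of the various intersections of the components of $B$; in particular the center $Z=Y$ is always present. If $f$ fails to be surjective one replaces $X$ by $f(Y)$ throughout, which changes nothing. With this dictionary in hand, the numerical bound produced by \cite[10.34]{KK13}---that $R^if_\ast\omega_Y(B)$ vanishes as soon as $i$ exceeds the largest dimension of a generic fiber of $f$ restricted to an lc center---is precisely the assertion that $R^if_\ast\omega_Y(B)=0$ for $i>c$. Note that the center $Z=Y$ alone already contributes $\dim F_Y=\dim Y-\dim f(Y)$, so the statement recovers the classical Grauert--Riemenschneider bound, and the remaining strata only enlarge the range over which vanishing is guaranteed.

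I expect the only obstacle to be this bookkeeping, together with confirming that the form of \cite[10.34]{KK13} being invoked yields an honest vanishing rather than merely torsion-freeness of $R^if_\ast\omega_Y(B)$ as a sheaf on $X$ (together with the description of its support as a union of images of lc centers). If only the latter is available, I would finish by hand: a torsion-free coherent sheaf on the integral scheme $X$ vanishes as soon as its stalk at the generic point of each $f(Z)$ vanishes, and that stalk is computed by the cohomology of a coherent sheaf on the generic fiber $F_Z$, which is projective of dimension $\dim F_Z$; by Grothendieck vanishing this cohomology dies once $i>\dim F_Z$, hence the contribution of every lc center vanishes once $i>c$. Since the hypotheses of \cite[10.34]{KK13}---an snc pair, a projective (in particular proper) morphism, and characteristic $0$---are exactly those in force here, the argument is short once \cite[10.34]{KK13} has been unwound.
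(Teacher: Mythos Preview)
The paper gives no proof of this proposition at all: it is simply quoted as a special case of \cite[10.34]{KK13}, with the remark that the general statement is stronger than what is needed. Your proposal is therefore exactly in line with the paper's treatment---indeed you supply more detail about the specialization than the paper does---so there is nothing to correct.
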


The statement of \cite[10.34]{KK13} is more general,
but we only need this version here.

\begin{proposition}[GR-type vanishing for thrifty resolutions]
  \label{GR-thrifty}
  If $f\colon (Y, B)\to (X,D)$ is a thrifty resolution,
  then $R^if_\ast\omega_Y(B)=0$ for all $i>0$.
\end{proposition}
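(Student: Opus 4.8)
The plan is to reduce this to Proposition \ref{GR-thrifty-prelim} by showing that for a thrifty resolution, the constant $c$ defined there equals $0$. Since $f$ is a resolution, in particular a projective birational morphism, and $(Y,B)$ is snc, the proposition applies. Recall that the lc centers of an snc pair $(Y,B)$ are exactly the strata of $(Y,B)$, that is, the irreducible components of the various intersections of the components of $B$ (including $Y$ itself as the empty intersection). So I need to check that for every stratum $Z$ of $(Y,B)$, the generic fiber $F_Z$ of $f|_Z\colon Z\to f(Z)$ is $0$-dimensional; equivalently, that $f|_Z$ is birational onto its image for every stratum $Z$.

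First I would invoke Condition 2 of thriftiness: the exceptional locus $\Ex(f)$ contains no stratum of $(Y,B)$. Hence for any stratum $Z$, the intersection $Z\cap\Ex(f)$ is a proper closed subset of $Z$, so the generic point $\eta_Z$ of $Z$ lies outside $\Ex(f)$. On the open set $Y\setminus\Ex(f)$ the morphism $f$ is an isomorphism onto its image; in particular $f$ is injective there and induces an isomorphism on a neighborhood of $\eta_Z$ in $Y$. Therefore $f|_Z$ restricted to a neighborhood of $\eta_Z$ in $Z$ is an isomorphism onto its image, so $f|_Z\colon Z\to f(Z)$ is birational and $\dim F_Z = \dim Z - \dim f(Z) = 0$. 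This holds for every stratum $Z$, including $Z = Y$ itself (where it just says $f$ is birational, which it is). Hence $c = 0$.

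Applying Proposition \ref{GR-thrifty-prelim} with $c=0$ now gives $R^if_\ast\omega_Y(B) = 0$ for all $i > 0$, which is exactly the claim.

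The only subtlety worth spelling out is the identification of the lc centers of an snc pair with its strata, and the fact that one must include all of them — not merely the strata of positive codimension. But since $f$ is birational, the top stratum $Z=Y$ contributes $\dim F_Y = 0$ as well, so no case is lost. I do not anticipate a serious obstacle here; the content of the proof is really carried by Proposition \ref{GR-thrifty-prelim}, and the role of thriftiness (specifically Condition 2) is precisely to force $c=0$.
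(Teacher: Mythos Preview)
Your proof is correct and follows essentially the same approach as the paper: both reduce to Proposition~\ref{GR-thrifty-prelim}, identify the lc centers of the snc pair $(Y,B)$ with its strata, and use Condition~2 of thriftiness to conclude that $f|_Z$ is birational onto its image for every stratum $Z$, forcing $c=0$. Your argument is slightly more explicit in unpacking why Condition~2 yields birationality on each stratum, but the content is the same.
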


\begin{proof} 
  We'll appeal to (\ref{GR-thrifty-prelim}),
  since $(Y,B)$ is an snc pair and the resolution 
  $f\colon Y\to X$ is projective.
  Let $Z$ be a stratum of $(Y,B)$, and let $F_Z$ be the generic
  fiber of the map $f|_Z\colon Z\to f(Z)$.
  The image $f(Z)$ is closed, since $f$ is projective, and it is
  also irreducible because $Z$ is, so it has a generic point.
  
  By (\ref{GR-thrifty-prelim}), $R^if_\ast\omega_Y(B)=0$
  for all $i>c$, where
  
  \begin{align*}
    c=\max\{\dim F_Z \colon  Z\text{ is a stratum}\}.
  \end{align*}
  
  \ 
  
  The lc centers of an snc pair are exactly its strata: see \cite[4.15]{KK13}.
  Since $f$ is thrifty, it is birational on every stratum of $(Y,B)$.
  In particular, it is dominant when restricted to each $Z$,
  so the dimension of each generic fiber is 0: see \cite[p.\ 290]{Eisenbud}.
  Thus $c=0$, so $R^if_\ast\omega_Y(B)=0$ for all $i>0$.
\end{proof}

\ 

\section{Hyperplane sections of rational pairs}

In this section we prove that a general hyperplane section of a rational pair
is again a rational pair.
First we'll verify that
thrifty resolutions restrict well to hyperplane sections of pairs in $\bP^N$.

\begin{notation}\label{hyperplane-notation} 
Let $X$ be a projective variety, let
$(X, D)$ be a reduced pair,
and let $f\colon (Y, C)\to (X, D)$ be a resolution.
If $X_H=X\cap H$ is a general hyperplane section of $X$ in $\bP^N$, then
let $D_H=D\cap H$; by generality of $H$, $D_H$ is a divisor in $X_H$.
Also, let $Y_H=f^{-1}_\ast X_H=f^\ast X_H$---it 
is a pullback, by generality
of $H$---and let $f_H=f|_{Y_H}$.
Finally, let $C_H=(f_H)^{-1}_\ast D_H$.
\end{notation}

\begin{lemma}\label{hyperplane-res}
  Using the notation of (\ref{hyperplane-notation}), 
  $f_H\colon (Y_H, C_H)\to (X_H, D_H)$ is
  a resolution.
\end{lemma}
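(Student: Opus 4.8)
The plan is to verify the two requirements in the definition of a resolution of pairs (\ref{def-resofpairs}): that $f_H\colon Y_H\to X_H$ is a resolution of singularities (in particular $Y_H$ is smooth and $f_H$ is proper birational), and that the pair $(Y_H, C_H)$ has simple normal crossings. Everything should follow from standard Bertini-type statements once I unravel the definitions in (\ref{hyperplane-notation}). Since $H$ is a general hyperplane in $\bP^N$, I would invoke Bertini repeatedly: $X_H$ is normal (even irreducible) of dimension $\dim X -1$; $Y_H$ is smooth of dimension $\dim Y - 1$, because $Y$ is smooth and a general member of the base-point-free linear system $f^\ast |\sO_{\bP^N}(1)|$ on $Y$ is smooth; and, crucially, by generality $H$ meets each stratum of the snc locus of $(Y,C+\Ex(f))$ transversally, so that the scheme-theoretic intersections behave as expected.

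First I would check that $f_H$ is proper and birational. Properness is inherited from $f$ by base change along $X_H\hookrightarrow X$, since $Y_H = f^{-1}(X_H)$ scheme-theoretically (here I use that $f^\ast X_H = f^{-1}_\ast X_H$, as asserted in (\ref{hyperplane-notation}) by generality of $H$ — the pullback divisor has no exceptional components because a general $H$ avoids the finitely many generic points of $f(\Ex f)$). For birationality: $f$ is an isomorphism over a dense open $U\subset X$, and for general $H$ the section $X_H$ meets $U$, so $f_H$ is an isomorphism over the dense open $U\cap X_H \subset X_H$; hence $f_H$ is birational. Combined with $Y_H$ smooth, this makes $f_H$ a resolution of singularities of $X_H$. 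I should also note $X_H$ is normal (Serre's criterion plus Bertini, or \cite[Theorem~II.8.18 / Bertini]{} — I'd cite a standard reference such as Jouanolou or the treatment in \cite{KK13}), so $(X_H, D_H)$ is a genuine reduced pair, with $D_H$ reduced because $D$ is and $H$ is general.

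Next I would show $(Y_H, C_H)$ is snc. The point is to identify $C_H$ concretely: write $C = f^{-1}_\ast D = \sum C_i$, the birational transform of $D$, with each $C_i$ smooth and the $C_i$ crossing normally (and transversally to $\Ex(f)$, though I only need the $C_i$ among themselves here). By generality of $H$, the hyperplane is transverse to $Y$, to each $C_i$, and to each stratum $\bigcap_{i\in I} C_i$; therefore $C_i\cap Y_H$ is smooth of the expected dimension and the collection $\{C_i\cap Y_H\}$ still crosses normally inside the smooth variety $Y_H$. It remains to check that $C_H := (f_H)^{-1}_\ast D_H$ equals $\sum_i (C_i\cap Y_H)$, i.e.\ that restricting the birational transform to a general hyperplane section is the birational transform of the restriction. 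This holds because $f_H$ is an isomorphism over $U\cap X_H$, which is dense in $X_H$, and over that locus both divisors restrict to $D_H\cap U$; taking closures in $Y_H$ and using that $Y_H\to X_H$ has no $C_i$-component contracted gives the equality. Hence $(Y_H, C_H)$ is snc and $f_H$ is a resolution of the pair $(X_H, D_H)$.

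The main obstacle, and the only place real care is needed, is the bookkeeping around the phrase ``by generality of $H$'': I must make sure that a single general $H$ simultaneously (i) misses the generic points of $f(\Ex f)$ so that $f^\ast X_H$ has no exceptional components and equals $f^{-1}_\ast X_H$, (ii) meets the dense open $U$ where $f$ is an isomorphism, (iii) is transverse to $Y$ and to every stratum of $(Y, C)$ — finitely many locally closed smooth subvarieties — so that all the intersections stay smooth of the right dimension and cross normally, and (iv) keeps $X_H$ normal and $D_H$ reduced. Each is an open dense condition on $H\in (\bP^N)^\vee$ (via Bertini, generic smoothness, and Kleiman transversality), so their common intersection is still dense; once that is set up, the snc verification is a routine dimension count. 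I would phrase the whole argument by fixing such an $H$ at the outset and then checking the conditions of (\ref{def-resofpairs}) in turn.
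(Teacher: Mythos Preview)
Your proposal is correct and follows essentially the same approach as the paper's proof, which verifies smoothness of $Y_H$ and the snc property of $(Y_H,C_H)$ via Bertini/transversality for general $H$. The paper's argument is considerably terser---it does not spell out birationality, properness, or the identification of $C_H$ with $C\cap Y_H$---but the substance is the same.
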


\begin{proof}
  We need to verify that $Y_H$ is smooth and $(Y_H, C_H)$ is an snc pair.
  Smoothness of $Y_H$ comes from the fact that a resolution of a 
  projective variety
  restricts to a resolution of a general one of its hyperplane sections.
  The same is true for each component of $C_H$.
  As for their intersections, since $H$ is general
  they are transverse.
\end{proof}

\begin{lemma}\label{hyperplane-thrifty}
  With the notation of (\ref{hyperplane-notation}),
  if $f$ is a thrifty log resolution,
  then $f_H$ is also thrifty.
\end{lemma}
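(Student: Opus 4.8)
The plan is to verify Conditions 1 and 2 for the restricted resolution $f_H\colon(Y_H,C_H)\to(X_H,D_H)$ directly, using the hypothesis that $f$ is a thrifty log resolution together with the genericity of $H$. The key point throughout is that, because $H$ is general, the hyperplane section interacts transversally with every relevant subvariety: the components of $D$ and their intersections, the components of $\Ex(f)$ and their intersections, the components of $C$, and the images $f(\Ex(f))$. So I would first record, as a setup step, that $f_H$ is again a \emph{log} resolution: $\Ex(f_H)=\Ex(f)\cap Y_H$ is a divisor in $Y_H$ (a general hyperplane section of the divisor $\Ex(f)$), and $(Y_H,C_H+\Ex(f_H))$ is snc because it is the general hyperplane section of the snc pair $(Y,C+\Ex(f))$. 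By (\ref{condition2}), this already gives Condition 2 for free, so the whole content of the lemma is Condition 1.

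For Condition 1, I would use the equivalent formulation: $f_H(\Ex(f_H))$ does not contain any stratum of $\snc(X_H,D_H)$. The main observation is that the snc locus behaves well under general hyperplane section: $\snc(X_H,D_H)\supseteq \snc(X,D)\cap H$, and more to the point, every stratum of $\snc(X_H,D_H)$ is, up to the general-position hypotheses, a hyperplane section $S\cap H$ of a stratum $S$ of $\snc(X,D)$ — a component of $\bigcap_{i\in I}(D_i\cap H)=\left(\bigcap_{i\in I}D_i\right)\cap H$, which by Bertini is $S\cap H$ for $S$ a component of $\bigcap_{i\in I}D_i$ meeting the snc locus. (One should note the low-dimensional or codimension-$\dim X$ strata are handled by genericity too, since a general $H$ misses any fixed finite set of points and meets any fixed positive-dimensional stratum in something of one lower dimension that still lies in $\snc(X_H,D_H)$.) Now suppose for contradiction that $S\cap H\subseteq f_H(\Ex(f_H))=f(\Ex(f))\cap H$. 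Then $S\cap H\subseteq f(\Ex(f))$, and since $S$ is irreducible and $H$ is general (so $S\not\subseteq H$, and $S\cap H$ is a hypersurface section of $S$), this forces $S\subseteq f(\Ex(f))$ — a closed set — contradicting Condition 1 for the thrifty resolution $f$. Hence $f_H$ satisfies Condition 1, and being a log resolution it is thrifty by (\ref{condition2}) (equivalently by (\ref{rational-log-thrifty})'s underlying logic, but (\ref{condition2}) suffices).

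The step I expect to be the main obstacle — or at least the one requiring the most care — is pinning down precisely that the strata of $\snc(X_H,D_H)$ are controlled by the strata of $\snc(X,D)$, i.e. that a general hyperplane section does not \emph{create} new snc strata that fail to be slices of old ones, and does not move the snc locus in an uncontrolled way. This is really a bundle of Bertini-type statements: Bertini for irreducibility and smoothness of $S\cap H$ for each of the finitely many strata $S$, transversality of $H$ with each component and each intersection of components of $D$, and the fact that $H$ can be chosen to avoid the (finitely many) zero-dimensional strata and the non-snc locus's worst behavior. Since all of this is finite data, a single general $H$ works for all of it simultaneously; I would cite the standard Bertini package (as already invoked in (\ref{hyperplane-res})) and spell out only the containment $S\cap H\subseteq f(\Ex(f))\Rightarrow S\subseteq f(\Ex(f))$ in detail, since that is the one genuinely new implication driving the contradiction.
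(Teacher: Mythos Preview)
Your proposal is correct and follows essentially the same approach as the paper: first observe that $f_H$ is again a log resolution (so Condition~2 is automatic via (\ref{condition2})), then verify Condition~1 by arguing that strata of $\snc(X_H,D_H)$ are hyperplane slices $S\cap H$ of strata $S$ of $\snc(X,D)$, and that for general $H$ no such slice can land inside $f(\Ex(f))$ unless $S$ itself does---which thriftiness of $f$ forbids. The paper's proof is terser and slightly less precise about the Bertini bookkeeping (it just says ``by generality this does not happen''), whereas you correctly flag the control of $\snc(X_H,D_H)$ in terms of $\snc(X,D)$ as the step requiring the most care; your treatment of that point is in fact more thorough than the paper's.
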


\begin{proof}
  Since $f$ is log, $\Ex(f)+C$ is an snc divisor in $Y$.
  Also, for general $H$, by the same reasoning as above
  we have that $C+\Ex(f)+Y_H$ is snc in $Y$.
  Thus the exceptional components of $f_H$ have the right 
  codimension---that is, 1---and
  $\Ex(f_H)$ is a divisor in $Y_H$.
  Also, $\Ex(f_H) + C_H$ is snc.
  
  So $f_H$ is also a log resolution, which means that Condition 2
  for thriftiness is automatic.
  We just need to verify Condition 1: no stratum of $(X_H, D_H)$ is
  in $f_H(\Ex(f_H))$.
  But for almost all $H$ this is true: the only $H$ for which
  this fails are those such that
  a component of $\Ex(f)$ maps into, but not onto, a stratum of
  $(X, D)$, and then $H$ cuts out exactly that image
  when it is intersected with $D$.
  But by generality this does not happen.
  
  So $f_H$ is thrifty as well.
\end{proof}

It is well known that rational singularities satisfy a Bertini-like theorem:
if $X\subset\bP^N$ is a projective variety with rational singularities,
then a general hyperplane section of $X$ also has rational singularities.
We'll show next that the analogous statement holds for rational pairs.

\begin{theorem}[Bertini-type result for rational pairs]
  \label{bertini-type}
    If $(X,D)$ is a projective rational pair,
  then a general hyperplane section of $(X,D)$ is also rational.
\end{theorem}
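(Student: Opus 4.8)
The plan is to take a thrifty rational resolution of the pair $(X,D)$, cut everything by a general hyperplane, and verify that the restricted resolution is again a thrifty rational resolution of the general hyperplane section. By the work done above this is mostly bookkeeping, with the one analytic input being a Bertini-type vanishing argument for the higher direct images.

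First I would fix notation exactly as in (\ref{hyperplane-notation}): let $f\colon (Y,B)\to(X,D)$ be a thrifty \emph{log} resolution of $(X,D)$ (such a resolution exists, since by the Corollary following (\ref{logres}) every thrifty resolution is dominated by a thrifty log resolution, and by \cite[2.86]{KK13} that dominating log resolution is again rational because $(X,D)$ is a rational pair). Embed $X$ in some $\bP^N$ and let $H$ be a general hyperplane; set $X_H=X\cap H$, $D_H=D\cap H$, $Y_H=f^\ast X_H$, $f_H=f|_{Y_H}$, and $C_H=(f_H)_\ast^{-1}D_H$. By (\ref{hyperplane-res}), $f_H\colon(Y_H,C_H)\to(X_H,D_H)$ is a resolution, and by (\ref{hyperplane-thrifty}) it is thrifty (indeed again a log resolution). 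So it remains only to show that $f_H$ is a \emph{rational} resolution of $(X_H,D_H)$, and then $(X_H,D_H)$ is rational by (\ref{rational-res-pairs}).

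For rationality of $f_H$ I would check the two conditions of (\ref{ratresofpairs}). The vanishing $R^if_{H\ast}\omega_{Y_H}(C_H)=0$ for $i>0$ is immediate: $f_H$ is thrifty, so (\ref{GR-thrifty}) applies directly. For the quasi-isomorphism $\sO_{X_H}(-D_H)\to\dR f_{H\ast}\sO_{Y_H}(-C_H)$, I would argue by restriction. Since $H$ is general, $Y_H\subset Y$ and $X_H\subset X$ are Cartier divisors, and $Y_H=f^\ast X_H$, so there are short exact sequences $0\to\sO_Y(-B)(-Y_H)\to\sO_Y(-B)\to\sO_{Y_H}(-C_H)\to 0$ and similarly downstairs $0\to\sO_X(-D)(-X_H)\to\sO_X(-D)\to\sO_{X_H}(-D_H)\to 0$ (here one uses that $C_H=B|_{Y_H}$ and $D_H=D|_{X_H}$ as divisors, again by generality). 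Applying $\dR f_\ast$ to the upstairs sequence and comparing with the downstairs sequence via the projection formula — $\dR f_\ast(\sO_Y(-B)\otimes f^\ast\sO_X(-X_H))\cong\dR f_\ast\sO_Y(-B)\otimes\sO_X(-X_H)$ — the hypothesis that $f$ is rational (i.e. $\sO_X(-D)\to\dR f_\ast\sO_Y(-B)$ is a quasi-isomorphism) forces the restricted map to be a quasi-isomorphism as well, by the five lemma applied to the long exact cohomology sequences of the two triangles. A little care is needed to see that the natural map really does fit into this comparison diagram, and that generality of $H$ lets one avoid any associated point or base locus issue so that the restriction sequences stay exact on the nose.

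The main obstacle, and the step I would spend the most effort on, is the second one: making the quasi-isomorphism $\sO_{X_H}(-D_H)\to\dR f_{H\ast}\sO_{Y_H}(-C_H)$ rigorous. One has to be careful that $\dR f_\ast$ of the restriction sequence computes $\dR f_{H\ast}$ of the restricted sheaf — this uses flat base change along the inclusion $X_H\hookrightarrow X$ together with the fact that $Y_H=f^{-1}(X_H)=f^\ast X_H$ scheme-theoretically for general $H$ — and that the higher cohomology sheaves of the ``twisted down'' term $\dR f_\ast(\sO_Y(-B)(-Y_H))$ don't interfere, which again follows from the projection formula and the rationality of $f$ upstairs. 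Once that diagram chase is set up, the result drops out; the genuinely new content beyond Sections 3 and 4 is just this Bertini/base-change argument.
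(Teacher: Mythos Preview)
Your proposal is correct and follows essentially the same route as the paper: take a thrifty log resolution, restrict it via (\ref{hyperplane-res}) and (\ref{hyperplane-thrifty}), then use the ideal-sheaf sequence for $Y_H\subset Y$ twisted by $-B$ together with the projection formula and the rationality of $f$ to obtain the quasi-isomorphism for $f_H$. One small quibble: identifying $\dR f_\ast$ of a sheaf supported on $Y_H$ with $\dR f_{H\ast}$ does not require flat base change (the inclusion $X_H\hookrightarrow X$ is not flat)---it follows simply from $f\circ j=\iota\circ f_H$ and the exactness of pushforward along closed immersions.
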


\begin{proof}
  We'll continue to use the notation of (\ref{hyperplane-notation}).
  For a general hyperplane $H\subset\bP^N$,
  $X_H$ is normal and $D_H\subset X_H$
  is a reduced divisor, so $(X_H,D_H)$ is also a reduced pair.
  
  Let $f\colon (Y, C)\to (X,D)$ be a thrifty log resolution, which
  is also rational because $(X,D)$ is rational.
  Then $f_H\colon (Y_H,C_H)\to (X_H, D_H)$ is also a thrifty resolution,
  by (\ref{hyperplane-res}) and (\ref{hyperplane-thrifty}).
  We'll show that $f_H$ is rational, and hence that $(X_H,D_H)$
  is a rational pair.
  
  Start with the short exact sequence
  \begin{align*}
    \xymatrix{0\ar[r]&\sO_Y(-Y_H)\ar[r]&\sO_Y\ar[r]&\sO_{Y_H}\ar[r]&0}
  \end{align*}
  Twist by $-C$, which is Cartier (by smoothness of $Y$), and get
  a new short exact sequence.
  Then push forward to $X$, obtaining the long exact sequence
  \begin{align*}
    \xymatrix{0\ar[r]& f_\ast\sO_Y(-Y_H-C)\ar[r]& f_\ast\sO_Y(-C)\ar[r]&
    f_\ast\sO_{Y_H}(-C_H)\ar[r]&\\
    \ar[r]& R^1f_\ast\sO_Y(-Y_H-C)\ar[r]&\cdots &&}
  \end{align*}
  Now $Y_H$ is a pullback.
  By the projection formula, together with the rationality
  assumption on $(X,D)$,
  \begin{align*}
    R^if_\ast\sO_Y(-Y_H-C)&\simeq R^if_\ast\sO_Y(-C)\otimes\sO_X(-X_H)\\
    &\simeq \begin{cases}
      \sO_X(-D-X_H) & i=0\\
      0 & \text{else}
    \end{cases}
  \end{align*}
  So $R^if_\ast\sO_{Y_H}(-C_H)=0$ for $i>0$ by the long exact sequence.
  Also, there is now have a short exact sequence of pushforwards by $f$,
  yielding isomorphisms
  \begin{align*}
    f_\ast\sO_{Y_H}(-C_H) &\simeq \sO_X(-D)/\sO_X(-D-X_H)\\
    &\simeq\sO_X(-D)\otimes\sO_{X_H}\\
    &\simeq\sO_{X_H}(-D_H).
  \end{align*}
  So $f_H$ is a thrifty rational resolution, and therefore $(X_H, D_H)$
  is a rational pair.
\end{proof}

\

\section{Preliminary results: base change by the local scheme near a point}

We are now ready to develop the main theorem:
that rationality of pairs is preserved by deforming a flat family
defined over a base scheme $S$.
First we'll reduce to an especially simple case,
where $S$ is the spectrum of a regular
local ring,
and then we'll prove the claim
in that situation with an induction argument.
In order to reduce to the case where the base
$S$ is $\Spec R$ for a regular local ring $R$, we need several preliminary
results about how rational pairs and thrifty resolutions behave
with respect to the base change by a morphism $\Spec\sO_{S,s}\to S$,
where $s\in S$ is a smooth point.

The results that follow are all closely related,
and they share a common notation.
For convenience, we collect all the notation here
and will refer back to it throughout the rest of the paper.

\begin{notation}
  Let $f\colon X\to S$ be a morphism, and $s\in S$ a point.
  Let $(X, D)$ be a reduced pair such that the fibers $(X_s, D_s)$ form a reduced pair.
  Also, let $f \colon (Y, B)\to(X, D)$ be a resolution.
  Let
  $\Spec\sO_{S,s}\to S$
  be the inclusion of the local scheme near $s$,
  and base change by this morphism:
  let $X'=X\times_S\Spec\sO_{S,s}$, and similarly for $D', Y', B'$.
  Write $\pi\colon X'\to X$ for the natural projection onto the first factor.
  \label{bc-notation}
\end{notation}

As we will see, all the salient aspects of this situation
are preserved by the base change by $\Spec\sO_{S,s}\to S$.
The next few results are standard commutative algebra,
but for lack of a reference we include the proofs here.

\begin{lemma} \label{base-change-stalks}
  With the notation of (\ref{bc-notation}), let $x'\in X'$.
  If $\pi(x')=x$, and $\sF$ is a sheaf of $\sO_X$-modules on $X$,
  then $\sF_x\simeq (\pi^\ast\sF)_{x'}$.
  That is, stalks of $\sO_X$-modules are preserved by the base change.
  In particular, $\sO_{X,x}\simeq\sO_{X',x'}$.
\end{lemma}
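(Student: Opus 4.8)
The claim to prove is Lemma \ref{base-change-stalks}: stalks of $\mathscr{O}_X$-modules are preserved under the base change $\pi\colon X'=X\times_S\Spec\mathscr{O}_{S,s}\to X$, and in particular $\mathscr{O}_{X,x}\simeq\mathscr{O}_{X',x'}$.

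The plan is to reduce everything to a purely algebraic statement about localization, since the morphism $\Spec\mathscr{O}_{S,s}\to S$ is, locally on $S$, the localization of a ring at a prime ideal. First I would work locally: choose an affine open $\Spec A\subset S$ containing $s$, with $s$ corresponding to a prime $\mathfrak{p}\subset A$, and an affine open $\Spec R\subset X$ lying over $\Spec A$ and containing $x$, so that $R$ is an $A$-algebra and $x$ corresponds to a prime $\mathfrak{q}\subset R$. Then $\mathscr{O}_{S,s}=A_{\mathfrak{p}}$, and the base change is $X'\supset\Spec(R\otimes_A A_{\mathfrak{p}})=\Spec(R_{\mathfrak{p}})$, where $R_{\mathfrak{p}}$ denotes the localization of $R$ at the multiplicative set $A\setminus\mathfrak{p}$ (the image of that set in $R$). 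The point $x'$ with $\pi(x')=x$ corresponds to a prime of $R_{\mathfrak{p}}$ contracting to $\mathfrak{q}$; since $\mathfrak{q}$ is disjoint from the image of $A\setminus\mathfrak{p}$ (because $\pi(x')=x$ forces $s$ to be the image of $x$, so $\mathfrak{q}\cap A=\mathfrak{p}$), there is a unique such prime, namely $\mathfrak{q}R_{\mathfrak{p}}$.

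The key step is then the standard fact that localization is transitive and commutes with itself: for an $R$-module $M=\mathscr{F}(\Spec R)$, one has
\begin{align*}
  (\pi^\ast\mathscr{F})_{x'} = (M\otimes_A A_{\mathfrak{p}})_{\mathfrak{q}R_{\mathfrak{p}}} = (M_{A\setminus\mathfrak{p}})_{\mathfrak{q}R_{\mathfrak{p}}} = M_{\mathfrak{q}} = \mathscr{F}_x,
\end{align*}
the middle equality being that localizing $M$ first at $A\setminus\mathfrak{p}$ and then at $\mathfrak{q}R_{\mathfrak{p}}$ is the same as localizing $M$ directly at the multiplicative set generated by $R\setminus\mathfrak{q}$ and $A\setminus\mathfrak{p}$, which equals $R\setminus\mathfrak{q}$ since $A\setminus\mathfrak{p}$ maps into $R\setminus\mathfrak{q}$. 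Taking $\mathscr{F}=\mathscr{O}_X$ gives $\mathscr{O}_{X',x'}\simeq\mathscr{O}_{X,x}$, and the isomorphism above is compatible with the $\mathscr{O}_{X,x}$-module structure because every map in the chain is a localization map. I would also note briefly that $x'$ with $\pi(x')=x$ exists (so the hypothesis is not vacuous), since primes of $R_{\mathfrak{p}}$ correspond to primes of $R$ disjoint from the image of $A\setminus\mathfrak{p}$.

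I do not expect a serious obstacle here: this is, as the paper says, standard commutative algebra, and the only things to be careful about are (i) making sure the point $x'$ lies over the base-changed piece (i.e.\ that $\pi(x')=x$ implies $s$ is the image of $x$ in $S$, which is what makes $\mathfrak{q}$ survive the localization), and (ii) keeping the module structure straight so that the conclusion is an isomorphism of $\mathscr{O}_{X,x}$-modules, not merely of abelian groups. The mild subtlety worth a sentence is that $\pi$ need not be affine globally, but since stalks are a local question we may pass to affine opens as above without loss of generality.
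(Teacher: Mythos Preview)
Your proof is correct and follows essentially the same approach as the paper's: reduce to the affine situation and use transitivity of localization, together with the observation that the image of $A\setminus\mathfrak{p}$ lands in $R\setminus\mathfrak{q}$. One minor slip worth correcting: $\pi(x')=x$ only forces the image of $x$ in $S$ to be a \emph{generalization} of $s$, i.e.\ $\mathfrak{q}\cap A\subset\mathfrak{p}$ rather than equality, but this weaker containment is already exactly what you need for $A\setminus\mathfrak{p}$ to map into $R\setminus\mathfrak{q}$.
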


\begin{proof}
  We may assume the schemes are affine, so the base change corresponds to
  a pushout diagram of rings:
  
  \begin{align*}
    \xymatrix{B\ar[r]& B_\fp\\
      A\ar[u]\ar[r] & A_\fp\ar[u]}
  \end{align*}
  
  \ 
  
  Here $A$ and $B$ are rings, $\phi \colon A\to B$ is the homomorphism corresponding to the affine
  morphism $\Spec B\to\Spec A$, and $\fp\lhd A$ is a prime ideal.
  Then the $\sO_X$-module $\tilde{F}$ has the form $\tilde{M}$ for a $B$-module $M$,
  and $\pi^\ast\sF=(M\otimes_BB_\fp)^\sim\simeq(M\otimes_AA_\fp)^\sim$.
  Let $\fq'$ be a prime ideal in the ring $B_\fp$.
  It corresponds to an ideal $\fq$ in $B$ that is disjoint from $f (A-\fp)$.
  Here $\fq'$ corresponds to $x'\in X'$; $\fq$ to $x\in X$.
  It suffices to verify that $(M\otimes_AA_\fp)_{\fq'}\simeq M_\fq$.
  By basic properties of localization (see \cite[Tag 02C7]{Stacks})
  and the fact that $\phi(A-\fp)\subset B-\fq$,
  this is true.

  Note in particular the case where $\sF\simeq\sO_X$:
  if $\pi(x')=x$, then $\sO_{X,x}\simeq\sO_{X',x'}$.
\end{proof}

\begin{corollary}\label{stalks}
  With the notation of (\ref{bc-notation}),
  let $\sP$ be a property of $X$ that may be
  checked locally (i.e., at stalks).
  Then $\sP$ holds on the image of the projection $\pi(X')\subset X$ 
  if and only if $\sP$ holds on $X'$.
\end{corollary}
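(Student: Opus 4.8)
The statement is a direct corollary of Lemma~\ref{base-change-stalks}, so the plan is to unwind the definitions and invoke that lemma. The key observation is that the projection $\pi\colon X'\to X$ identifies, for each point $x'\in X'$, the stalk $\sO_{X',x'}$ with the stalk $\sO_{X,\pi(x')}$. So any property $\sP$ that is defined purely in terms of the local ring $\sO_{X,x}$ at a point $x$ will hold at $x'\in X'$ precisely when it holds at $\pi(x')\in X$.

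First I would spell out the two directions. For the ``only if'' direction: suppose $\sP$ holds at every point of $\pi(X')\subset X$. Given any $x'\in X'$, set $x=\pi(x')$; then $x\in\pi(X')$, so $\sP$ holds at $x$, meaning the local ring $\sO_{X,x}$ has property $\sP$. By Lemma~\ref{base-change-stalks} we have $\sO_{X',x'}\simeq\sO_{X,x}$, so $\sO_{X',x'}$ also has property $\sP$; since $x'$ was arbitrary, $\sP$ holds on all of $X'$. For the ``if'' direction: suppose $\sP$ holds on $X'$. Let $x\in\pi(X')$, and pick any $x'\in X'$ with $\pi(x')=x$. Then $\sO_{X,x}\simeq\sO_{X',x'}$ by Lemma~\ref{base-change-stalks}, and the latter has property $\sP$ by hypothesis, so $\sO_{X,x}$ does as well; hence $\sP$ holds at every point of $\pi(X')$.

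There is essentially no obstacle here: the entire content is the stalk isomorphism $\sO_{X,x}\simeq\sO_{X',x'}$ already established in Lemma~\ref{base-change-stalks}, combined with the hypothesis that $\sP$ is ``checkable at stalks,'' which is exactly the assertion that $\sP$ holds at $x$ if and only if the ring $\sO_{X,x}$ has the property in question. The only mild subtlety worth a sentence is that the localization map $\Spec\sO_{S,s}\to S$ need not be surjective onto $S$, which is why the statement is phrased in terms of the image $\pi(X')$ rather than all of $X$; but $\pi$ itself is surjective onto its image by definition, so every point of $\pi(X')$ is hit by some $x'$, and that is all the argument needs.
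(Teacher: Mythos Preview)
Your proposal is correct and matches the paper's intent: the paper states this as an immediate corollary of Lemma~\ref{base-change-stalks} with no written proof, and your argument is exactly the natural unpacking of that claim. There is nothing to add.
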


\begin{example}\label{stalk-applications}
  We continue to use the notation of (\ref{bc-notation}).
  Before moving on, we'll note a
  few specific applications of (\ref{stalks})
  to the varieties $X$ and $Y$, and 
  their counterparts $X'$ and $Y'$ under the base change.  
  Here are a few useful choices for the property $\sP$ in (\ref{stalks}).
  We'll refer back to these in the rest of the paper.
  
\begin{enumerate}
  \item {\bf Nonsingularity.}
    From (\ref{stalks}) we immediately 
    see that $Y'$ is nonsingular (because $Y$ is),
    and each component of $B'\subset Y'$ is nonsingular
    (because each component of $B$ is).

    \ 

  \item {\bf Codimension.}
    if $x'\in X'$ maps to $x\in X$, then $x'$ and $x$ have the same
    codimension.
    This is immediately clear from (\ref{base-change-stalks}),
    because $\codim(x,X)=\dim\sO_{X,x}$ and similarly for $\codim(x',X')$.

    \ 
    
  \item {\bf Reduced divisors.}
    Moreover, since we are assuming that $D\subset X$ is a reduced divisor,
    the base change $D'$ is also reduced.
    The coefficient of a component of a divisor
    is checked at the stalk at the generic point of each component,
    and we have just seen that
    codimension-1 points in $X'$ map to codimension-1 points in $X$.
    Note that the function field also is involved
    in checking the coefficient of a divisor.
    The function field is also preserved---it is the stalk of the structure sheaf
    of any generic point of the base change---so reducedness is preserved.

\ 

  \item {\bf The snc locus.}
    The property of being in the snc locus of a pair is checked using
    stalks of the structure sheaf and of quotients of the structure sheaf.
    Nonsingularity of the variety and of each component of the divisor
    is checked on stalks of the structure sheaf.
    The other requirement for a point to be in the snc locus of a pair
    is that any components of the divisor
    that pass through the point must meet transversally.
    That is, the local equations
    that cut out the components form part
    of a regular sequence in the stalk of the structure sheaf
    at each point of the variety (see \cite[3.1.5]{SGA}).
    Now the stalks of the structure sheaf are preserved
    by the base change,
    and a component of $D'$ passes through $x'$ in $X'$
    if and only if its image passes through $x=\pi(x')$ in $X$.
    So $x'$ is in $\snc(X',D')$ if and only if $x=\pi(x')$ is
    in $\snc(X,D)$.
    Since $(Y,B)$ is assumed to be snc in (\ref{bc-notation}),
    we also have that $(Y',B')$ is snc.
\end{enumerate}
\end{example}

\ 

\begin{corollary}\label{base-change-nbhd}
  Using (\ref{bc-notation}),
  let $\sF\to\sG$ be a morphism of coherent sheaves of $\sO_X$-modules on $X$,
  and assume that the induced morphism $\pi^\ast\sF\to\pi^\ast\sG$ on $X'$ is an isomorphism.
  Then $\sF\to\sG$ is an isomorphism in a neighborhood of the fiber $X_s$.
\end{corollary}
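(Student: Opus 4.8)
The plan is to deduce the statement from the stalk-preservation Lemma~\ref{base-change-stalks} together with the fact that the support of a coherent sheaf on the Noetherian scheme $X$ is closed. Write $\varphi\colon\sF\to\sG$ for the given morphism, and set $\sK=\ker\varphi$ and $\sQ=\coker\varphi$. These are coherent $\sO_X$-modules, and $\varphi$ is an isomorphism exactly over the open set $U=X\setminus(\supp\sK\cup\supp\sQ)$. So it suffices to show that $X_s\subset U$, i.e.\ that $\sK_x=\sQ_x=0$ for every point $x$ of the fiber $X_s$.

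To check this, fix $x\in X_s$ and choose a point $x'\in X'$ with $\pi(x')=x$. Such an $x'$ exists because the image of $\Spec\sO_{S,s}\to S$ contains $s$, and hence $\pi(X')$ contains all of $X_s$ (more precisely, $\pi(X')$ is the preimage of the set of generizations of $s$ under $X\to S$). Applying Lemma~\ref{base-change-stalks} to $\sF$ and to $\sG$ gives $\sF_x\simeq(\pi^\ast\sF)_{x'}$ and $\sG_x\simeq(\pi^\ast\sG)_{x'}$ as modules over $\sO_{X,x}\simeq\sO_{X',x'}$; since these isomorphisms are simply the localization isomorphisms, they are natural in the sheaf, so under them the stalk map $\varphi_x$ is identified with $(\pi^\ast\varphi)_{x'}$. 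By hypothesis $\pi^\ast\varphi$ is an isomorphism of sheaves, hence $(\pi^\ast\varphi)_{x'}$ is an isomorphism, and therefore so is $\varphi_x$. Thus $\sK_x=\sQ_x=0$, as desired.

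There is no real obstacle here; the one point that deserves a word of care is the upgrade from the pointwise statement ``$\varphi_x$ is an isomorphism for each $x\in X_s$'' to the open statement ``$\varphi$ is an isomorphism on a neighborhood of $X_s$,'' which is exactly where coherence of $\sK$ and $\sQ$ enters, via the closedness of $\supp\sK$ and $\supp\sQ$ and the fact that they are disjoint from $X_s$. Alternatively, one can avoid stalk-by-stalk bookkeeping by first observing that $\pi\colon X'\to X$ is flat (being a base change of the flat morphism $\Spec\sO_{S,s}\to S$), so $\pi^\ast$ is exact and $\pi^\ast\sK=\ker(\pi^\ast\varphi)=0$, $\pi^\ast\sQ=\coker(\pi^\ast\varphi)=0$; then Lemma~\ref{base-change-stalks} (or Corollary~\ref{stalks}) immediately yields $\sK_x=\sQ_x=0$ for all $x\in\pi(X')\supseteq X_s$, and one finishes as above.
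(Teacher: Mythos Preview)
Your proof is correct and follows essentially the same route as the paper: show $X_s\subset\pi(X')$, use Lemma~\ref{base-change-stalks} to conclude that $\varphi_x$ is an isomorphism for each $x\in X_s$, and then invoke coherence of the kernel and cokernel to spread this to an open neighborhood. Your alternative argument via flatness of $\pi$ and exactness of $\pi^\ast$ is a nice streamlining that the paper does not mention, but the core idea is the same.
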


\begin{proof}
  First, note that the natural inclusion morphism $X_s\to X$ factors 
  through the projection
  $\pi\colon X'\to X$.
  This is just because the local ring $\sO_{S,s}$ maps to the residue
  field $\sO_{S,s}/\fm_s = k(s)$, so the fiber
  $X_s=X\times_S\Spec k(s)$ maps to $X'=X\times_S\Spec\sO_{S,s}$.
  So the fiber $X_s$ is contained in the image of $\pi$.
  By (\ref{base-change-stalks}), for any $x'\in X'$ with $\pi(x')=x$, there are isomorphisms
  
  \begin{align*}
    \sF_x\simeq(\pi^\ast\sF)_{x'},\qquad \sG_x\simeq(\pi^\ast\sG)_{x'}.
  \end{align*}
  
  \ 
  
  So $\sF_x\to\sG_x$ is an isomorphism for all $x\in X_s$.
  Now the stalk of the kernel sheaf is the kernel of the stalk morphism,
  and there is a natural isomorphism between the
  stalk of the cokernel sheaf and the cokernel of the morphism on stalks
  (\cite[2.5.A--B]{Vakil}).
  So $\ker(\sF\to\sG)_x=0$ and $\coker(\sF\to\sG)_x=0$ for all $x\in X_s$.
  
  Now $\ker(\sF\to\sG)$ and $\coker(\sF\to\sG)$ are coherent because $\sF$ and $\sG$ are.
  For any coherent sheaf $\sH$, if $\sH_x=0$, then $\sH|_U=0$ for some neighborhood $U$ of $x$.
  So the sheaves $\ker(\sF\to\sG)$ and $\coker(\sF\to\sG)$ are zero in a neighborhood
  of $x$.
  Since this is true for all $x\in X_s$, it follows that $\sF\to\sG$ is an isomorphism
  in a neighborhood of the fiber $X_s$.
\end{proof}

\begin{lemma}
  \label{base-change-thrifty-log}
  With the notation of (\ref{bc-notation}),
  the morphism $f '\colon Y'\to X'$ is a resolution of singularities.
  Moreover, $B'$ is the birational transform of $D'$ and
  $(Y',B')$ is snc; thus, $(Y',B')\to(X',D')$ is a resolution of pairs.
  If $f$ is a log resolution, then so is $f'$,
  and if $f$ is thrifty, so is $f'$.
\end{lemma}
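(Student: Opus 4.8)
# Proof proposal for Lemma \ref{base-change-thrifty-log}

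The plan is to verify each assertion in turn, relying heavily on the stalk-preservation machinery already established in (\ref{base-change-stalks}), (\ref{stalks}), and (\ref{stalk-applications}). First I would check that $f'\colon Y'\to X'$ is a proper birational morphism: properness is preserved under base change, and since $\Spec\sO_{S,s}\to S$ is flat, the fiber product $Y'=Y\times_S\Spec\sO_{S,s}$ is again integral with the same function field as $X'$ (the function field is the stalk of the structure sheaf at a generic point, which is preserved by (\ref{base-change-stalks})), so $f'$ is birational. Normality of $X'$ and $Y'$, and nonsingularity of $Y'$, follow from (\ref{stalk-applications})(1); normality can be checked via Serre's criterion, i.e.\ at stalks, so it descends through (\ref{stalks}) as well. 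For the claim that $B'$ is the birational transform of $D'$: the birational transform is characterized by being the reduced divisor on $Y'$ whose components dominate the components of $D'$ and contain no exceptional component. Using (\ref{stalk-applications})(2) and (3), codimension-$1$ points of $Y'$ map to codimension-$1$ points of $Y$, so the components of $B'$ are exactly the base changes of the components of $B$, none of which is $f'$-exceptional; hence $B'=(f')^{-1}_\ast D'$. That $(Y',B')$ is snc is precisely (\ref{stalk-applications})(4).

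Next, the log resolution claim. Suppose $f$ is a log resolution, so $\Ex(f)$ is a divisor and $(Y,B+\Ex(f))$ is snc. I would first argue that $\Ex(f')$ is the base change of $\Ex(f)$: a point $y'\in Y'$ lies in $\Ex(f')$ iff $f'$ fails to be a local isomorphism at $y'$, which by (\ref{base-change-stalks}) (applied to the natural map $\sO_{X',x'}\to\sO_{Y',y'}$, identified with $\sO_{X,x}\to\sO_{Y,y}$) happens iff $\pi_Y(y')=y\in\Ex(f)$. So $\Ex(f')$ is a divisor, and letting $E$ be the reduced divisor on $\Ex(f)$, the pair $(Y',B'+E')$ is snc by another application of (\ref{stalk-applications})(4) to the reduced divisor $B+E$. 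Thus $f'$ is a log resolution.

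Finally, the thrifty claim. Suppose $f$ is thrifty. For Condition 2, I would simply invoke (\ref{condition2}): if $f$ is in addition a log resolution then so is $f'$ by the previous paragraph, hence $f'$ satisfies Condition 2. (If one wants thriftiness preserved without the log hypothesis, Condition 2 must be checked directly: a stratum $Z'$ of $(Y',B')$ is a component of an intersection of components of $B'$, which by the identification above is the base change of a corresponding intersection in $Y$, so $Z'$ dominates a stratum $Z$ of $(Y,B)$ with $\pi_Y(Z')\subseteq Z$; since $\Ex(f')$ is the base change of $\Ex(f)$ and $f$ is thrifty, $\Ex(f)\not\supseteq Z$, whence $\Ex(f')\not\supseteq Z'$.) For Condition 1, I would use that $f(\Ex(f))$ contains no stratum of $\snc(X,D)$. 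By (\ref{stalk-applications})(4), $x'\in\snc(X',D')$ iff $\pi(x')\in\snc(X,D)$, and the strata of $\snc(X',D')$ are the components of intersections of components of $D'$ lying in the snc locus, which are base changes of strata of $\snc(X,D)$. Since $f'(\Ex(f'))=f'(\pi_Y^{-1}\Ex(f))=\pi^{-1}(f(\Ex(f)))$ and $f(\Ex(f))$ contains no stratum of $\snc(X,D)$, its preimage contains no stratum of $\snc(X',D')$; so $f'$ satisfies Condition 1.

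The main obstacle I anticipate is the bookkeeping around Condition 1: one must be careful that "contains no stratum" is genuinely preserved, i.e.\ that a stratum of $\snc(X',D')$ cannot accidentally be swallowed by $f'(\Ex(f'))$ even though its image in $X$ is not swallowed by $f(\Ex(f))$. This is handled by the exact identity $f'(\Ex(f'))=\pi^{-1}(f(\Ex(f)))$ together with the fact that every stratum of $\snc(X',D')$ dominates (is the base change of) a stratum of $\snc(X,D)$, so containment of the former in a $\pi$-preimage forces containment of the latter. The rest is routine descent of local properties through (\ref{stalks}).
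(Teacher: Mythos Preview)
Your proposal is correct and follows essentially the same route as the paper: both reduce everything to stalk preservation via (\ref{base-change-stalks})--(\ref{stalk-applications}), identify $\Ex(f')$ with the base change of $\Ex(f)$, and then check Conditions 1 and 2 by showing that strata of $(X',D')$ and $(Y',B')$ map to strata of $(X,D)$ and $(Y,B)$. The only place the paper is more explicit is in verifying that the image $\pi(Z')$ of a stratum $Z'$ really is a full stratum downstairs (not merely contained in one): it invokes the codimension preservation of (\ref{stalk-applications})(2) together with \cite[4.16.2]{KK13}, whereas you assert this correspondence; you should add that one line of justification.
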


\begin{proof}
  First of all, $f'$ is proper and $Y'$ is smooth:
  properness is always preserved by base change, and $Y'$
  is smooth by (\ref{stalk-applications}).

  We'll now verify that $f'$ is birational.  
  Let $\eta'$ be some codimension-0 point in $X'$.
  Irreducibility
  is not necessarily preserved by the base change, so there might be multiple
  such points; we may choose any one.
  This $\eta'$ maps to some point $\eta$ in $X$,
  which also has codimension 0 by
  (\ref{stalk-applications}).
  Now this $\eta$ is the generic point of $X$, because $X$ is irreducible. 
  Because $Y\to X$ is surjective,
  this point $\eta$ has a preimage $\xi$ in $Y$, 
  and this $\xi$ is the generic point of $Y$.
  Then there is a point in $Y'\simeq Y\times_XX'$ mapping to $\xi$;
  call this point $\xi'$.
  By (\ref{stalk-applications}), $\xi'$ has codimension 0,
  and the map on stalks
  $\sO_{X',\eta'}\to\sO_{Y',\xi'}$
  is exactly the same map as
  $\sO_{X,\eta}\to\sO_{Y,\xi}$,
  which is an isomorphism because $Y\to X$ is birational.
  Then $f'$ is an isomorphism over every generic point of $X'$.
  So it is birational over each component of $X'$.
  
  Now $(Y',B')$ is snc, by (\ref{stalk-applications}).
  The same argument as above shows that $B'\to D'$ is birational over every component
  of $D'$, so $(Y',B')\to (X',D')$ is a resolution of pairs.
    
  We'll show next that the exceptional locus of $f$
  in $Y$ base changes to the exceptional locus of $f'$ in $Y'$,
  and the image of the exceptional locus of $f$ in $X$
  base changes to the image of the exceptional locus of $f'$ in $X'$.
  
  First, taking the base change of all the exceptional components in $Y$
  gives us exactly the exceptional components in $Y'$.
  If a point on $X$ is in the image of $\Ex(f)$
  and also of the projection $X'\to X$,
  then its preimages in $X'$ are in the image of $\Ex(f')$.
  Indeed, given an exceptional component of the resolution $Y\to X$,
  the map $\sO_{X',f'(y')}\to\sO_{Y',y'}$ at its generic point
  $y'$ is not an isomorphism,
  and this persists in the base change to the resolution $Y'\to X'$
  by (\ref{base-change-stalks}):
  it is precisely the same map as $\sO_{X,f(y)}\to\sO_{Y,y}$,
  if $\pi(y')=y$.
  From this the converse follows too: 
  an exceptional component in $Y'$
  maps to an exceptional component in $Y$,
  and a point in $f'(\Ex(f'))\subset X'$ maps to a point in $f(\Ex(f))\subset X$.
  So the base change of the exceptional locus is the exceptional locus,
  and the base change of the image of the exceptional locus
  is the image of the exceptional locus.
  
  Now suppose $f$ is a log resolution.
  We'll show the same is true of $f'$.
  We already know that the base change of a divisor is a divisor, 
  because codimension of points is preserved.
  Consider an irreducible component $F$ of the exceptional locus in $Y'$.
  We'll show $F$ has codimension 1 in $Y'$.
  Assume the opposite: that $F$ has codimension $k\geq 2$ in $Y'$.
  If $\nu$
  is the generic point of $F$, then the map on stalks
  \begin{align*}
    \sO_{X',f '(\nu)}\to\sO_{Y',\nu}
  \end{align*}
  is \emph{not} an isomorphism, because $Z$ is exceptional,
  but there is a closed, codimension-1 subset $V$ of $Y'$ containing $F$
  so that the map on stalks at the generic point of $V$ is an isomorphism.
  Then the map on stalks at the corresponding codimension-$k$ point in $Y$
  is not an isomorphism, but at the codimension-1 point it is.
  In other words, the resolution $Y\to X$ also has an
  exceptional component that is not a divisor---but this is impossible,
  because we assume $Y\to X$ is a log resolution,
  so its exceptional components must be divisors.
  
  So the exceptional locus is a divisor in $Y'$.
  Taking the snc locus commutes with this base change
  by (\ref{stalk-applications}), so $f'$
  is a log resolution.
  
  Finally we show that if $f$ is thrifty, then so is $f'$.
  To verify Conditions 1 and 2 of (\ref{def-thrifty}),
  we must examine the resolution $f'$ at generic points of strata
  of $\snc(X',D')$ and of $(Y',B')$.
  
  We'll start with Condition 1.
  Let $Z$ be a stratum of $\snc(X',D')$,
  so that $Z$ is an irreducible component of
  some intersection $D'_{i_1}\cap\cdots\cap D'_{i_r}$.
  Then, with $\pi\colon X'\to X$ the projection,
  $\pi(Z)$ is irreducible and
  contained in a component of 
  the intersection $D_{i_1}\cap\cdots\cap D_{i_r}$ in $X$.
  Now, by applying (\ref{stalk-applications}),
  we see that
  $\pi(Z)$ has the same codimension in $X$
  as $Z$ does in $X'$, and $\pi(Z)\subset\snc(X,D)$.
  By \cite[4.16.2]{KK13}, which says that the intersection of
  $r$ components of a dlt divisor
  has pure codimension $r$,
  $\pi(Z)$ is actually equal to a stratum of $\snc(X,D)$ 
  (and is not merely contained in one).
  Now the assumption that $f$ is thrifty means that
  $\pi(Z)$ is not contained in $f(\Ex(f))$.
  By the argument above, which showed that taking the image of
  the exceptional locus commutes with the base change
  $X'\to X$,
  $Z$ is not in $f'(\Ex(f'))$, and so $f'$ satisfies Condition 1.
  
  Condition 2 is almost the same.
  If a stratum of $(Y',B')$ lies in $\Ex(f')$,
  then its image is a stratum of $(Y,B)$
  and lies in $\Ex(f)$,
  but this is impossible because $f$ is thrifty.
  So $f'$ also satisfies Condition 2, and hence is thrifty.
\end{proof}

\ 

\section{Deformation invariance of rational pairs}

In this section we prove the main theorem:
given a pair $(X,D)$ with $D$ Cartier and a flat morphism $X\to S$,
if the fibers $(X_s,D_s)$ over a smooth point $s\in S$ are a rational (reduced) pair,
then $(X,D)$ is also rational near $X_s$.

Because we assume $D$ is Cartier, we may use (\ref{kempf2}), the analogue of Kempf's
criterion for rational pairs,
to conclude that $(X,D)$ is rational near $X_s$.
In order to check the second part of (\ref{kempf2}), we will need to
exhibit a thrifty resolution $f$ of $X$ so that
that $f_\ast\omega_Y(B)\simeq\omega_X(D)$, at least near $X_s$.
The next lemma shows that actually we only need to verify
that $f_\ast\omega_Y(B)\to\omega_X(D)$ is surjective:
injectivity is automatic.

\begin{lemma}
  \label{can-sheaves-injective}
  Suppose $f\colon Y\to X$ is a birational and proper morphism between normal varieties, 
  $D$ is a divisor in $X$,
  $B=f^{-1}_\ast D$ is the birational transform of $D$ in $Y$,
  and $\omega_Y$ is torsion free.
  (For example, let $f\colon (Y,B)\to (X,D)$ be a resolution, so that $\omega_Y$ is invertible.)
  Then there is a logarithmic trace morphism $f_\ast\omega_Y(B)\to\omega_X(D)$,
  and it is injective.
\end{lemma}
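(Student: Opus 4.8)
The plan is to construct the logarithmic trace as an honest inclusion of subsheaves of a single ambient object, from which injectivity is immediate. Let $\mathscr{K}$ be the constant sheaf on $X$ whose value on every nonempty open set is the stalk of $\omega_X$ at the generic point; since $f$ is birational we identify $K(Y)=K(X)$, and accordingly $\omega_X$ and $\omega_Y$ agree at the generic point, so $\mathscr{K}$ is simultaneously the constant sheaf of ``rational canonical sections'' for both $X$ and $Y$. Because $\omega_Y$ is torsion free of rank one, so is $\omega_Y(B)$, hence $\omega_Y(B)$ is a subsheaf of $\mathscr{K}$ on $Y$; pushing forward, $f_\ast\omega_Y(B)$ is a subsheaf of $\mathscr{K}$ on $X$. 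On the other hand $\omega_X(D)=\mathcal{O}_X(K_X+D)$ is a divisorial sheaf on the normal variety $X$, hence reflexive, in particular $S_2$ and torsion free, and is also a subsheaf of $\mathscr{K}$. All three embeddings are compatible with the tautological identification $\omega_Y\cong\omega_X$ over the open set $U\subseteq X$ on which $f$ is an isomorphism.

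First I would record the standard fact that $f$ is an isomorphism in codimension one on $X$: there is a closed subset of codimension $\ge 2$ outside of which $f$ is an isomorphism (at a codimension-one point $x$, normality of $X$ makes $\mathcal{O}_{X,x}$ a discrete valuation ring, hence maximal for the domination order among local subrings of $K(X)$, so for $y\in Y$ with $f(y)=x$ the local inclusion $\mathcal{O}_{X,x}\subseteq\mathcal{O}_{Y,y}$ is an equality). The key claim is then that $f_\ast\omega_Y(B)\subseteq\omega_X(D)$ as subsheaves of $\mathscr{K}$. To see it, fix an open $W\subseteq X$ and a section $\sigma\in\Gamma\big(f^{-1}W,\,\omega_Y(B)\big)$, regarded as a rational canonical section, and check the stalk condition $\sigma_x\in\omega_X(D)_x$ at every codimension-one point $x$ of $W$. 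If $x$ is the generic point of a component $D_i$ of $D$, then $f$ is an isomorphism near $x$, the generic point of the corresponding component $B_i$ of $B$ maps to $x$, and $\sigma$ has a pole of order at most one along $B_i$; hence $\sigma_x\in\omega_X(D)_x$. If $x\notin D$, then again $f$ is an isomorphism near $x$ and $B$ is empty there (as $f(B_i)\subseteq D_i$), so $\sigma$ is a regular section of $\omega_Y=\omega_X$ at $x$, and again $\sigma_x\in\omega_X(D)_x$. Thus the locus where $\sigma$ fails to lie in $\omega_X(D)$ has codimension $\ge 2$ in $W$, so $\sigma$ is a section of $\omega_X(D)$ over $W$ minus a closed subset of codimension $\ge 2$; since $\omega_X(D)$ is $S_2$, it is in fact a section over all of $W$. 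This proves the claim, and the inclusion $f_\ast\omega_Y(B)\hookrightarrow\omega_X(D)$ is the desired logarithmic trace morphism: it restricts to the canonical identification over $U$, and, being a map into a torsion-free sheaf that agrees generically with the usual trace, it coincides with it.

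Injectivity is then built in, since an inclusion of subsheaves of $\mathscr{K}$ is injective. If one prefers to decouple existence from injectivity, injectivity also follows formally from the torsion-freeness observations above: the kernel of the trace is a subsheaf of the torsion-free sheaf $f_\ast\omega_Y(B)$, hence torsion free, and it vanishes on the dense open $U$ where the trace is an isomorphism, hence it is zero. I expect the only real friction to be bookkeeping — pinning down the mutually compatible embeddings of the three sheaves into the single constant sheaf $\mathscr{K}$, and combining the ``isomorphism in codimension one'' fact with the $S_2$ property of $\omega_X(D)$ to upgrade the codimension-one pole estimate into a genuine section over $W$.
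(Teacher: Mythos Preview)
Your proof is correct and follows essentially the same strategy as the paper: both arguments hinge on the two facts that $f$ is an isomorphism over the complement of a codimension-$\ge 2$ subset of $X$ (by normality) and that $\omega_X(D)$ is reflexive, then deduce injectivity from torsion-freeness of $f_\ast\omega_Y(B)$. The only difference is packaging: the paper constructs the trace via the unit map $f_\ast\omega_Y(B)\to i_\ast i^\ast f_\ast\omega_Y(B)$ for the inclusion $i\colon U\hookrightarrow X$ and then invokes $i_\ast(\omega_X(D)|_U)=\omega_X(D)$, whereas you embed both sheaves into the constant sheaf $\mathscr{K}$ of rational canonical sections and verify the containment stalkwise at codimension-one points before using $S_2$ to extend; these are two standard phrasings of the same idea.
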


\begin{proof}
  Let $U\subset X$ be the largest open set 
  over which $f$ is an isomorphism.
  The complement of $U$ has codimension at least $2$,
  because $X$ is normal.
  Let $i\colon U\hookrightarrow X$ be the inclusion.
  On $U$, we have
  
  \begin{align*}
    f_\ast\omega_Y(B)|_U\simeq \omega_X(D)|_U,
  \end{align*}
  
  \ 
  
  because the restricted map $f\colon f^{-1}(U)\to U$ is an isomorphism.
  Now $i^\ast$ and $i_\ast$ are adjoint functors, so there is a natural morphism
  
  \begin{align*}
    f_\ast\omega_Y(B)\to i_\ast i^\ast f_\ast\omega_Y(B),
  \end{align*}
  
  \ 
  
  and the sheaf on the right can also be written as $i_\ast(f_\ast\omega_Y(B)|_U)$.
  Putting these maps together, we obtain a composition
  
  \begin{align*}
    f_\ast\omega_Y(B)\to i_\ast(f_\ast\omega_Y(B)|_U)\to i_\ast(\omega_X(D)|_U).
  \end{align*}
  
  \ 
  
  On the open set $U$, $\omega_X(D)$ and $i_\ast(\omega_X(D)|_U)$ are equal.
  The complement $X\setminus U$ has codimension at least 2
  and the sheaves are reflexive, so they are equal on $X$ by \cite[1.6]{ReflexiveSheaves}.
  From this we have the desired map $f_\ast\omega_Y(B)\to\omega_X(D)$.
  
  Now we use the assumption that $\omega_Y$ is torsion free.
  This guarantees that $\omega_Y(B)$ is also torsion free, 
  as is its pushforward $f_\ast\omega_Y(B)$.
  Indeed, for any open set $V\subset X$, the sections of $f_\ast\omega_Y(B)$
  on $V$ are by definition the same as those of $\omega_Y(B)$ on $f^{-1}(V)$.
  Now $f_\ast\omega_Y(B)\to\omega_X(D)$ is an isomorphism at the generic
  point of $X$,
  so the kernel of the morphism is a torsion sheaf.
  But $f_\ast\omega_Y(B)$ is torsion free, so the logarithmic trace map is injective.
\end{proof}

\begin{theorem}[Deformation invariance for rational pairs]
  \label{main-thm}
  Let $(X, D)$ be a pair, with $D$ Cartier.
  Suppose $X\to S$ is a flat morphism, and $s\in S$ is a smooth point
  so that the fibers $(X_s, D_s)$ form a reduced pair.
  If $(X_s, D_s)$ is a rational pair, then $(X,D)$ is a rational pair
  in a neighborhood of $(X_s, D_s)$.
  
  That is, if $(X_s,D_s)$ is rational at $x$, then
  $(X,D)$ is also rational at $x$.
\end{theorem}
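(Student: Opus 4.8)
The plan is to follow Elkik's strategy for rational singularities, adapting it to the pairs setting via the criterion~(\ref{kempf2}). First I would reduce to the case where $S=\Spec R$ for a regular local ring $R$. This is exactly what the base-change results of Section~5 are for: by Lemma~\ref{base-change-thrifty-log}, a thrifty log resolution of $(X,D)$ base changes to a thrifty log resolution of $(X',D')$, and by Corollary~\ref{base-change-nbhd}, to prove that $f_\ast\omega_Y(B)\to\omega_X(D)$ is an isomorphism in a neighborhood of $X_s$ it suffices to prove the analogous statement after base change to $\Spec\sO_{S,s}$. Since $s$ is a smooth point, $\sO_{S,s}$ is a regular local ring. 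So from now on $S=\Spec R$ with $R$ regular local of some dimension $d$, with closed point $s$, and $X_s=V(t_1,\dots,t_d)\subset X$ for a regular system of parameters $t_1,\dots,t_d$. I will induct on $d$; the base case $d=0$ is the hypothesis that $(X_s,D_s)$ is rational.

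For the inductive step, set $t=t_d$ and let $X_t=V(t)\subset X$, a Cartier divisor, with $D_t=D\cap X_t$; by flatness $X\to S$ restricts to a flat morphism $X_t\to S'=\Spec(R/t)$, and $R/t$ is again regular local, now of dimension $d-1$. By the inductive hypothesis applied to $(X_t,D_t)\to S'$, the pair $(X_t,D_t)$ is rational in a neighborhood of $X_s$; since $D_t$ is Cartier in $X_t$, this means by~(\ref{kempf2}) that $X_t$ is CM near $X_s$ and $(f_t)_\ast\omega_{Y_t}(B_t)\simeq\omega_{X_t}(D_t)$ near $X_s$, where $f_t\colon(Y_t,B_t)\to(X_t,D_t)$ is the restriction of a thrifty log resolution $f\colon(Y,B)\to(X,D)$ to $Y_t=f^\ast X_t$. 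Here one uses Theorem~\ref{logres} in the same way as in the hyperplane-section argument: the restriction of $f$ to $X_t$ is dominated by a thrifty resolution of $(X_t,D_t)$, and one checks that the relevant pushforward is unchanged under passing to that dominating resolution. Since $X_t$ is CM near $X_s$ and $X_t$ is a Cartier divisor in $X$, the variety $X$ is also CM near $X_s$, giving condition~(1) of~(\ref{kempf2}).

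It remains to show $f_\ast\omega_Y(B)\to\omega_X(D)$ is an isomorphism near $X_s$. By Lemma~\ref{can-sheaves-injective} this map is already injective, so I only need surjectivity. The idea is to restrict to $X_t$ and use the inductive conclusion there. Consider the short exact sequence $0\to\sO_Y(-Y_t)\to\sO_Y\to\sO_{Y_t}\to 0$; tensoring with the invertible sheaf $\omega_Y(B)$ and using $Y_t=f^\ast X_t$ together with adjunction $\omega_Y(B)\otimes\sO_{Y_t}\simeq\omega_{Y_t}(B_t)$, one obtains $0\to\omega_Y(B)(-f^\ast X_t)\to\omega_Y(B)\to\omega_{Y_t}(B_t)\to 0$. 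Pushing forward by $f$ and applying GR-type vanishing for thrifty resolutions~(\ref{GR-thrifty}) to kill $R^1f_\ast$ of the left-hand term (after the projection formula identifies it with $\omega_Y(B)\otimes f^\ast\sO_X(-X_t)$, whose higher pushforwards vanish), one gets a surjection $f_\ast\omega_Y(B)\to (f_t)_\ast\omega_{Y_t}(B_t)\simeq\omega_{X_t}(D_t)$, the last isomorphism being the inductive hypothesis. Now compare with the sequence $0\to\omega_X(D)(-X_t)\to\omega_X(D)\to\omega_{X_t}(D_t)\to 0$ (valid since $X$ is CM near $X_s$ and $X_t$ is Cartier, so $\omega_{X_t}(D_t)\simeq\omega_X(D)\otimes\sO_{X_t}$ by adjunction). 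A diagram chase with the logarithmic trace map, together with the fact that $t$ is a nonzerodivisor and that the cokernel of $f_\ast\omega_Y(B)\to\omega_X(D)$ is supported away from the generic point, shows by Nakayama that the cokernel vanishes near $X_s$.

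The main obstacle I expect is making the comparison of adjunction sequences rigorous at the level of sheaves near $X_s$: one must ensure that the identifications $\omega_Y(B)\otimes\sO_{Y_t}\simeq\omega_{Y_t}(B_t)$ and $\omega_X(D)\otimes\sO_{X_t}\simeq\omega_{X_t}(D_t)$ hold (the former uses smoothness of $Y$ and that $Y_t$ is a smooth Cartier divisor with $B|_{Y_t}=B_t$; the latter uses that $X$ is CM near $X_s$, which is only available after the CM step above), and that the restriction-of-resolution step genuinely produces the pushforward computed by a thrifty resolution of $(X_t,D_t)$ rather than merely of the possibly-bad pair obtained by naive restriction—this is where Theorem~\ref{logres} and an argument in the spirit of Lemma~\ref{hyperplane-thrifty} are essential. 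The Nakayama/nonzerodivisor bookkeeping at the end is routine once the sequences are set up correctly.
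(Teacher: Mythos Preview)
Your outline is the same as the paper's—base change to a regular local base, induct on its dimension via a regular parameter $t$, use adjunction sequences together with (\ref{GR-thrifty}) and Nakayama to deduce surjectivity of the logarithmic trace. The difficulty you flag at the end is exactly the point that needs care, and your handling of it does not go through as written.

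Two related problems. First, $Y_t=f^{-1}(X_t)$ need not be smooth or even irreducible: $X_t$ is a fixed Cartier divisor, not a general hyperplane, so the hyperplane-section argument does not apply and your parenthetical ``$Y_t$ is a smooth Cartier divisor'' is false. The adjunction $\omega_Y(B)|_{Y_t}\simeq\omega_{Y_t}(B_t)$ still holds (only Cartier is needed, and here $\sO_Y(Y_t)$ is even trivial since $t$ is a function), so your exact sequence on $Y$ survives; but $f_t\colon(Y_t,B_t)\to(X_t,D_t)$ is not a resolution at all. Second, and more seriously, the claim that ``the relevant pushforward is unchanged under passing to the dominating resolution'' is not justified and is not what the paper proves. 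What the paper does is choose a component $Y_1\subset Y_t$ mapping birationally to $X_t$, observe that $f_1=f|_{Y_1}$ satisfies Condition~1 so that (\ref{logres}) produces a thrifty log resolution $f_2\colon(Y_2,B_2)\to(X_t,D_t)$ factoring through $Y_1$, and then string together the injections
\[
(f_2)_\ast\omega_{Y_2}(B_2)\ \hookrightarrow\ (f_1)_\ast\omega_{Y_1}(B_1)\ \hookrightarrow\ f_\ast\omega_{Y_t}(B_t),
\]
the first by (\ref{can-sheaves-injective}) and the second by Grothendieck duality for the closed embedding $Y_1\hookrightarrow Y_t$. Since the inductive hypothesis gives $(f_2)_\ast\omega_{Y_2}(B_2)\simeq\omega_{X_t}(D_t)$, this shows the right vertical arrow $f_\ast\omega_{Y_t}(B_t)\to\omega_{X_t}(D_t)$ is \emph{surjective}—not that $f_\ast\omega_{Y_t}(B_t)\simeq\omega_{X_t}(D_t)$. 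Surjectivity is all you need for the snake lemma/Nakayama step, so once you replace your ``unchanged pushforward'' claim with this factorization the rest of your argument is correct. (Minor omission: you should also note that normality of $X$ near $X_s$ follows from normality of $X_t$, parallel to your CM step, since (\ref{kempf2}) requires $X$ normal.)
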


\begin{proof}
  The first step is to show that we may assume the base $S$
  is the spectrum of a regular local ring $R$.
  To see this, first
  base change the morphism $X\to S$ by the flat morphism
  $\Spec\sO_{S,s}\to S$, and let $X'=X\times_S\Spec\sO_{S,s}$
  as in the notation of (\ref{bc-notation}).
  Similarly, let $D'=D\times_S\Spec\sO_{S,s}$.
  Then $X'\to\Spec\sO_{S,s}$ is again flat.
  
  We'll show that it suffices to prove the result for the new pair $(X',D')$:
  if $(X',D')$ is rational near $X_s$, then $(X,D)$ is
  also ratonal near $X_s$.
  Let $f\colon (Y,B)\to (X, D)$ be a thrifty log resolution,
  not necessarily rational.
  Then there is a Cartesian diagram:
  
  \begin{align*}
    \xymatrix{Y\ar[d]_f&Y'\ar[d]^{f'}\ar[l]_{\rho}\\
    X\ar[d] & X'\ar[d]\ar[l]_{\pi}\\
    S &\Spec\sO_{S,s}\ar[l]}
  \end{align*}
  
  \ 
  
  By (\ref{stalks}), if $X'$ is normal and CM, then so is $X$
  at every point in the image of the projection $X'\to X$,
  and if $D'$ is reduced, 
  then so is $D$ at every point in the image of $D'\to D$.
  Also, by (\ref{base-change-thrifty-log}),
  $f'\colon (Y',B')\to (X',D')$ is a thrifty log resolution.
  
  Suppose for now that we have shown $(X', D')$ is a rational pair
  in a neighborhood $U$ of $X_s$, so that every
  thrifty resolution of the pair is rational over $U$.
  Then $f'$ is a rational resolution; that is, on $U$ we have
  
  \begin{align*}
    \dR(f')_\ast\sO_{Y'}(-B')\simeq \sO_{X'}(-D').
  \end{align*}
  
  \ 
  
  The map $\Spec\sO_{S,s}\to S$ is flat, so by cohomology and base change 
  for flat morphisms (\cite[III.9.3]{Hart}),
  there is an isomorphism on $U$:
  
  \begin{align*}
    \pi^\ast \dR f_\ast\sO_Y(-B)\simeq\sO_{X'}(-D').
  \end{align*}
  
  \ 
  
  To prove the original thrifty resolution $f$ is then rational in a neighborhood of $X_s$,
  we need to verify that $\dR f_\ast\sO_Y(-B)\simeq\sO_X(-D)$ near $X_s$.
  Now $\sO_{X'}(-D')=\pi^\ast\sO_X(-D)$, and the pushforwards $R^if_\ast\sO_Y(-B)$ are coherent.
  By (\ref{base-change-nbhd}) it follows that $\dR f_\ast\sO_Y(-B)\simeq\sO_X(-D)$ in a neighborhood
  of $X_s$.
  
  So it suffices to prove the result in the case where the base is
  $\Spec\sO_{S,s}$.
  We may then assume that $S=\Spec R$, where $R$ is a regular local
  ring of dimension $n$.
  
  We'll prove that $X$ is normal in a neighborhood of the fiber $X_s$.
  Then, by (\ref{kempf2}) and (\ref{GR-thrifty}), we will 
  only need to prove that 
  $X$ is CM near $X_s$ and
  for some thrifty resolution $f\colon (Y,B)\to(X,D)$, the logarithmic trace
  $f_\ast\omega_Y(B)\to\omega_X(D)$
  is an isomorphism in a neighborhood of $X_s$.
  By (\ref{can-sheaves-injective}) the logarithmic trace is injective,
  so we will show that $X$ is normal and CM and the map of sheaves is surjective
  in a neighborhood of $X_s$.

  Following the idea of the proof of 
  \cite[Th\'eor\`eme 2]{Elkik}, we'll prove this using induction on $n$.  
  Our base case is $n=0$.
  If $n=0$, then $S=\Spec K=\{s\}$ for some field $K$.
  In this case $(X, D)=(X_s, D_s)$, so the conclusion is trivially true.

  Now let $\dim R=n$.
  For the inductive hypothesis, assume the result is true for $\dim R < n$:
  if the base scheme is the spectrum of a regular local ring of dimension less than $n$,
  and if $(X_s,D_s)$ is rational,
  then $(X,D)$ is rational in a neighborhood of $X_s$.

  Let $t$ be a regular parameter in $R$.
  Then let $X_t=X\times_S \Spec (R/tR)$, so that $X_t$ is the pullback of the
  divisor defined by $t$ in $S=\Spec R$, and define $D_t$ similarly.
  Then $X_t$ is Cartier in $X$, and $D_t$ in $D$.
  Note that $(X_s, D_s)$ is a subpair of $(X_t, D_t)$: 
  $X_s\subset X_t$ and $D_s\subset D_t$.
  
  Now the regular local ring $R/tR$ has dimension $n-1$, so
  $(X_t,D_t)$ is rational near $X_s$ by the inductive hypothesis.
  In particular, $X_t$ is normal and CM.
  Then $X$ is also normal and CM in a neighborhood of $X_t$: 
  these are properties that
  pass from a Cartier divisor to an open set in the whole space.
  Any neighborhood of $X_t$ is also a neighborhood of $X_s$,
  so $X$ is CM and normal near $X_s$.
  Working in this neighborhood,
  normality allows us to use (\ref{kempf2}),
  and since we also have in this neighborhood that $X$ is CM, 
  $f$ is thrifty, and $f_\ast\omega_Y(B)\to\omega_X(D)$
  is injective, it just remains to show that
   $f_\ast\omega_Y(B)\to\omega_X(D)$
  is surjective near $X_s$.
  
  Let $(Y,B)\to (X,D)$ be a thrifty log resolution.
  There is then a proper morphism $Y_t\to X_t$
  and a component $Y_1$ of $Y_t$ mapping birationally to $X_t$.
  Write $f_1=f|_{Y_1}\colon Y_1\to X_t$,
  and let $B_1=(f_1)^{-1}_\ast(D_t)$ be the birational transform of $D_t$.
  Now $(Y_1,B_1)$ is not snc---$Y_1$ need not even be smooth---but the morphism
  $(Y_1,B_1)\to (X_t,D_t)$ is birational and satisfies
  Condition 1 from (\ref{def-thrifty}).
  By (\ref{logres}), there is a thrifty log resolution
  $f_2\colon (Y_2,B_2)\to (X_t,D_t)$ factoring through $(Y_1,B_1)$.
  Write $\tilde{f}$ for the intermediate birational morphism $Y_2\to Y_1$.
  
  \begin{align*}
    \xymatrix{(Y_2,B_2)\ar[r]^-{\tilde{f}}\ar[rd]_-{f_2} & (Y_1,B_1)\ar[d]^-{f_1}\\
    & (X_t,D_t)}
  \end{align*}
  
  \

  By assumption, $(X_t, D_t)$ is a rational pair, so $f_2$ is
  a rational resolution.
  Now $D_t$ is Cartier in $X_t$, so by (\ref{kempf2}), the logarithmic trace map
  is an isomorphism:
  
  \begin{align*}
    (f_2)_\ast\omega_{Y_1}(B_1)\simeq\omega_{X_t}(D_t).
  \end{align*}
   
   \ 
   
  Because $t$ is a regular parameter, we have an exact sequence
  
  \begin{align*}
    \xymatrix{0\ar[r]&\omega_Y\ar[r]^{\cdot t}&\omega_Y\ar[r]&\omega_{Y_t}\ar[r]&0}
  \end{align*}
  
  \ 
  
  and similarly for $\omega_X, \omega_{X_t}$.
  Twist the sequences by $B$ and $D$, respectively.
  Both operations are exact because $B,D$ are Cartier.
  Push forward the sequence on $Y$ by $f$.
  The result is a commutative diagram:

  \begin{align*}
    \xymatrix{0\ar[r]&f_\ast\omega_Y(B)\ar[r]^-{\cdot t}\ar[d]&f_\ast\omega_Y(B)\ar[r]\ar[d]
      &f_\ast\omega_{Y_t}(B_t)\ar[r]\ar[d]&0\\
      0\ar[r]&\omega_X(D)\ar[r]_-{\cdot t}&\omega_X(D)\ar[r]&\omega_{X_t}(D_t)\ar[r]&0}
  \end{align*}

  \ 
  
  Both rows are exact: the top is exact by (\ref{GR-thrifty}), 
  the analogue of the Grauert-Riemenschneider
  vanishing theorem for thrifty resolutions.

  By Grothendieck duality, $\omega_{Y_1}(B_1)$ is a subsheaf of $\omega_{Y_t}(B_t)$.
  Moreover, by (\ref{can-sheaves-injective}),
  the logarithmic trace map $\tilde{f}_\ast\omega_{Y_2}(B_2)\to\omega_{Y_1}(B_1)$ is injective.
  Composing these injective maps, we get
  an injection  $\tilde{f}_\ast\omega_{Y_2}(B_2)\to\omega_{Y_t}(B_t)$.
  Pushing forward by $f$ and using that $f_2=f\circ \tilde{f}$, we have
  an injective map
  
  \begin{align*}
     f_\ast \tilde{f}_\ast\omega_{Y_2}(B_2)=(f_2)_\ast\omega_{Y_2}(B_2)\hookrightarrow f_\ast\omega_{Y_t}(B_t).
  \end{align*}

  \ 
  
  The isomorphism $(f_2)_\ast\omega_{Y_2}(B_2)\to\omega_{X_t}(D_t)$ 
  then factors through
  $f_\ast\omega_{Y_t}(B_t)$, so the composition
  
  \begin{align*}
     f_\ast\omega_{Y_2}(B_2)\to f_\ast\omega_{Y_t}(B_t)\to\omega_{X_t}(D_t)
  \end{align*}
  
  \ 
  
  is injective and surjective.
  Thus the right vertical arrow in the diagram is surjective.
  
  To prove our desired result---that $f$ is a rational resolution---we
  just need to verify that the middle vertical arrow is also surjective
  in a neighborhood of $X_s$.
  Let $x\in X_s$ be any point, not necessarily closed.
  Then $x$ maps to the single closed point---the maximal ideal $\fm$---of $\Spec R$.
  This is clear from the definition of $R$: it is $\sO_{S,s}$ for a smooth point $s\in S$,
  and $X_s$ is just the fiber over $s$.

  Working in an affine neighborhood of our point $x$, we may assume $X=\Spec A, Y=\Spec C$,
  and $\omega_X(D),\omega_Y(B)$ correspond to finite modules $M,N$ over $A,C$
  respectively.
  We have ring maps $R\to A\to C$.
  Considering $N$ as an $A$-module via the map $A\to C$,
  and then thinking of $N\to M$ as a morphism of $A$-modules, we have
  the local version of the morphism
  $f_\ast\omega_Y(B)\to\omega_X(D)$.
  
  The logarithmic trace map $f_\ast\omega_Y(B)\to\omega_X(D)$ is injective by 
  (\ref{can-sheaves-injective}), so
  $N\to M$ is injective and $N$ is a submodule of $M$.
  The morphism of sheaves $f_\ast\omega_{Y_t}(B_t)\to\omega_{X_t}(D_t)$
  corresponds locally
  to the map of modules $N/tN\to M/tM$.
  Let $\fp$ be the ideal in $A$ corresponding to the point $x$.
  Since $t\in\fm$ and $\fp$ pulls back to $\fm$, $t$ is also in $\fp$ when we view $t$
  as an element of $A$.  
  
  Next we localize the entire diagram of $A$-modules at $\fp$,
  to get a diagram of maps between $A_\fp$-modules:

  \begin{align*}  
    \xymatrix{ & 0\ar[d]&0\ar[d]&K\ar[d]&\\
     0\ar[r]&N_\fp\ar[r]^-{\cdot t}\ar[d]&N_\fp\ar[r]\ar[d]&N_\fp/tN_\fp\ar[r]\ar[d]&0\\
      0\ar[r]&M_\fp\ar[r]^-{\cdot t}\ar[d]&M_\fp\ar[r]\ar[d]&M_\fp/tM_\fp\ar[r]\ar[d]&0\\
      &(M/N)_\fp & (M/N)_\fp & 0 &}
  \end{align*}
  
  \ 
  
  Localization is exact, so the rows of the diagram are short exact, 
  the first two arrows are injective, and the right arrow is surjective.
  Also, localization commutes with taking
  quotients, so $(M/tM)_\fp\simeq M_\fp/tM_\fp$,
  $(N/tN)_\fp\simeq N_\fp/tN_\fp$, and $(M/N)_\fp\simeq M_\fp/N_\fp$.
  By the snake lemma, there is a short exact sequence:
  
  \begin{align*}
    \xymatrix{0\ar[r]&K\ar[r]&(M/N)_\fp\ar[r]^-{\cdot t}&(M/N)_\fp\ar[r]&0}
  \end{align*}
  
  \ 
  
  In particular, notice that the map $(M/N)_\fp\to(M/N)_\fp$ given by multiplication
  by $t$ is surjective, so $(M/N)_\fp = t(M/N)_\fp$.
  
  All the modules we had before localizing were finite over $A$, so all the 
  localized modules
  in the new diagram are finite over $A_\fp$.
  Now $t$ is in the single maximal ideal of $A_\fp$ and
  $(M/N)_\fp = t(M/N)_\fp$, so 
  $(M/N)_\fp=0$ by Nakayama's lemma.
  
  This argument holds for every point $x\in X_s$, so
  the cokernel sheaf $\omega_X(D)/ f_\ast\omega_Y(B)$ is zero at every $x\in X_s$.
  Then, because the cokernel sheaf is coherent, it is actually zero on
  an open set of $X$ containing $X_s$.
  
  Now we've shown that $\omega_X(D)\simeq f_\ast\omega_Y(B)$ in a neighborhood
  of the fiber $X_s$.
  By the reductions above, the induction is complete and
  $(X,D)$ is rational in a neighborhood
  of $X_s$.
\end{proof}

If the family is also proper and $S$ is a curve, then we can say a bit more
about the fibers near $X_s$.

\begin{corollary}\label{main-corollary}
  In the situation of (\ref{main-thm}),
  if the morphism $X\to S$ is also proper
  and $S$ is a curve,
  then there is a neighborhood $W\subset S$ of $s$
  such that for any $w\in W$,
  the pair of fibers $(X_w,D_w)$ is rational.
\end{corollary}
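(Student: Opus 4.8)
The plan is to combine (\ref{main-thm}) with a Bertini-type argument, treating a general fibre of $X\to S$ as though it were a general hyperplane section and reusing the reasoning behind (\ref{hyperplane-res}), (\ref{hyperplane-thrifty}), and (\ref{bertini-type}).

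First I would reduce to the case in which $(X,D)$ is a rational pair. By (\ref{main-thm}) there is an open set $U\subset X$ containing $X_s$ such that $(U,D\cap U)$ is a rational pair. Since $X\to S$ is proper, the image of $X\setminus U$ in $S$ is a closed set not containing $s$, so there is an open neighbourhood $W_0\ni s$ in $S$ with $X\times_S W_0\subset U$. Shrinking $W_0$, I may also assume it is a smooth affine curve. Replacing $X$ and $S$ by $X\times_S W_0$ and $W_0$, the pair $(X,D)$ is rational; in particular $X$ is normal and CM, and $X\to S$ remains flat and proper. This is the only place where properness of $X\to S$ is used.

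Now fix a thrifty log resolution $f\colon (Y,B)\to (X,D)$. Since $(X,D)$ is rational, $f$ is a rational resolution by (\ref{rational-res-pairs}), so in particular $\dR f_\ast\sO_Y(-B)\simeq\sO_X(-D)$. For a closed point $w\in S$ the fibre $X_w$ is a Cartier divisor in $X$ (pulled back from the smooth curve $S$); set $D_w=D\cap X_w$, $Y_w=f^{-1}(X_w)=f^\ast X_w$, $f_w=f|_{Y_w}$, and $B_w=(f_w)^{-1}_\ast D_w$---these are exactly the data of (\ref{hyperplane-notation}), with the family of fibres playing the role of a linear system of hyperplanes. By generic smoothness in characteristic $0$, for all but finitely many closed $w$, as well as for the generic point of $S$, the pair $(X_w,D_w)$ is a reduced pair, $Y_w$ is smooth, and $f_w\colon (Y_w,B_w)\to (X_w,D_w)$ is a resolution; the proofs of (\ref{hyperplane-res}) and (\ref{hyperplane-thrifty}) then go through verbatim to show that for such $w$ the morphism $f_w$ is in fact a thrifty log resolution. (No such genericity is needed at $w=s$, where $(X_s,D_s)$ is rational by hypothesis.)

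For these general $w$ I would then repeat the computation from the proof of (\ref{bertini-type}): twist the exact sequence $0\to\sO_Y(-Y_w)\to\sO_Y\to\sO_{Y_w}\to 0$ by $-B$ (Cartier, by smoothness of $Y$), push forward by $f$, and use the projection formula together with $\sO_Y(-Y_w)\simeq f^\ast\sO_X(-X_w)$ and $\dR f_\ast\sO_Y(-B)\simeq\sO_X(-D)$ to deduce $\dR f_{w\ast}\sO_{Y_w}(-B_w)\simeq\sO_{X_w}(-D_w)$. The remaining condition for $f_w$ to be a rational resolution in the sense of (\ref{ratresofpairs}), namely $R^if_{w\ast}\omega_{Y_w}(B_w)=0$ for $i>0$, is automatic from (\ref{GR-thrifty}) because $f_w$ is thrifty. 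Hence $f_w$ is a thrifty rational resolution and $(X_w,D_w)$ is a rational pair. Consequently the closed points $w$ at which $(X_w,D_w)$ fails to be rational form a finite set, and this set does not contain $s$; letting $W\subset W_0$ be its complement (an open neighbourhood of $s$, which also contains the generic point of $S$) gives the desired conclusion. The step I expect to be the main obstacle is the genericity input: checking that $f_w$ remains a thrifty log resolution of $(X_w,D_w)$ for general $w$, i.e.\ the fibre analogues of (\ref{hyperplane-res}) and (\ref{hyperplane-thrifty}); this parallels the hyperplane-section case already treated, with generic smoothness of $Y\to S$ (and of $D\to S$) replacing the generality of the hyperplane.
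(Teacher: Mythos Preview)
Your proposal is correct and follows essentially the same approach as the paper. Both first use properness of $X\to S$ to shrink $S$ so that $(X,D)$ itself is a rational pair, and then invoke the Bertini-type argument of (\ref{bertini-type}) to deduce that the general fibre $(X_w,D_w)$ is rational; the paper simply cites (\ref{bertini-type}) for this last step, while you spell out explicitly how the proofs of (\ref{hyperplane-res}), (\ref{hyperplane-thrifty}), and (\ref{bertini-type}) adapt when the linear system of hyperplanes is replaced by the family of fibres over the smooth curve $S$.
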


\begin{proof}
  Because $f\colon X\to S$ is now assumed proper, there is
  an open set $W\subset S$ so that for every $w\in W$,
  the fiber $X_w$ is contained in the open locus where $(X,D)$
  is a rational pair.
  By replacing $X$ with $f^{-1}W$ and $S$ with $W$,
  we still have a proper flat family and we may assume $(X,D)$
  is a rational pair.
  
  Now a general hyperplane section of $(X,D)$
  is also rational by the Bertini theorem for rational pairs (\ref{bertini-type}),
  so the fibers $(X_t,D_t)$ over points $t$ near $s$ are also rational.
\end{proof}

\ 

\

\bibliographystyle{alpha}
\bibliography{thesisbib}

\end{document}